\numberwithin{equation}{section}
\newtheorem*{rep@theorem}{\rep@title}
\newcommand{\newreptheorem}[2]{%
\newenvironment{rep#1}[1]{%
 \def\rep@title{#2 \ref{##1}}%
 \begin{rep@theorem}}%
 {\end{rep@theorem}}}
\newtheorem{theorem}{Theorem}[section]
\newtheorem{lemma}[theorem]{Lemma}
\newtheorem{corollary}[theorem]{Corollary}
\newtheorem{problem}[theorem]{Problem}
\newcommand{\N}{{\mathbb N}}
\newcommand{\E}{\mathbb E}
\newcommand{\Prob}{\mathbb{P}}
\newcommand{\varx}{x}
\newcommand{\vary}{y}
\begin{document}

\makeatletter
\providecommand\@dotsep{5}
\renewcommand{\listoftodos}[1][\@todonotes@todolistname]{%
  \@starttoc{tdo}{#1}}
\makeatother

\title[Subgraph Games in the Semi-Random Graph Process]{Subgraph Games in the Semi-Random Graph Process and Its Generalization to Hypergraphs}

\author[N.\ Behague]{Natalie Behague}
\address{Mathematics Institute, University of Warwick, Coventry, UK}
\email{\texttt{natalie.behague@warwick.ac.uk}}
%\thanks{Natalie Behague was supported by the Pacific Institute for the Mathematical Sciences}

\author[T.G.\ Marbach]{Trent G.\ Marbach}
\address{Department of Mathematics, Toronto Metropolitan University, Toronto, ON, Canada}
\email{\tt trent.marbach@gmail.com}

\author[P.\ Pra\l{}at]{Pawe\l{} Pra\l{}at}
\address{Department of Mathematics, Toronto Metropolitan University, Toronto, ON, Canada}
\email{\texttt{pralat@torontomu.ca}}
\thanks{Pawe\l{} Pra\l{}at was partially supported by the Natural Sciences and Engineering Research Council of Canada}

\author[A.\ Ruci\'nski]{Andrzej Ruci\'nski}
\address{Faculty of Mathematics and Computer Science, Adam Mickiewicz University, Pozna\'n, Poland}
\email{\texttt{rucinski@amu.edu.pl}}
\thanks{Andrzej Ruci\'nski was supported by Narodowe Centrum Nauki, grant 2018/29/B/ST1/00426}

\keywords{}

\maketitle

\begin{abstract}
The semi-random graph process is a single-player game that begins with an empty graph on $n$ vertices. In each round, a vertex $u$ is presented to the player independently and uniformly at random. The player then responds by selecting a vertex $v$ and adds the edge $uv$ to the graph. For a fixed (monotone) increasing graph property, the player's objective is to force the graph to satisfy this property with high probability in as few rounds as possible.

We focus on the problem of constructing a subgraph isomorphic to an arbitrary, fixed graph $H$.  %Let $\omega = \omega(n)$ be any function tending to infinity as $n \to \infty$.
In~\cite{beneliezer2019semirandom}, it was proved that asymptotically almost surely one can construct $H$ in $t$ rounds, for any $t\gg n^{(d-1)/d}$ where $d \ge 2$ is the degeneracy of~$H$. It was also proved that this result is sharp for $H = K_{d+1}$ %, that is, asymptotically almost surely it takes at least $n^{(d-1)/d} / \omega$ rounds to create $K_{d+1}$.
 and conjectured that it is so for all graphs $H$. We prove this conjecture, and the conjecture's generalization to a semi-random $s$-uniform hypergraph process for every $s\ge2$.

%We also consider a natural generalization of the process to $s$-uniform hypergraphs, the semi-random hypergraph process in which $r \ge 1$ vertices are presented at random, and the player then selects $s-r \ge 1$ vertices to form an edge of size~$s$. Our results for graphs easily generalize to hypergraphs when $r=1$; the threshold for constructing a fixed $s$-uniform hypergraph $H$ is, again, determined by the degeneracy of $H$. %However, new challenges are mounting when $r \ge 2$; thresholds are not even known for complete hypergraphs. We provide bounds for this family and determine thresholds for some sparser hypergraphs.
\end{abstract}

%%%%%%%%%%%%%%%%%%%%%%%%%%%%%%%%%%%%%%%%%%%%%%%%%%%%%%%%%%%
\section{Introduction\label{intro}}
%%%%%%%%%%%%%%%%%%%%%%%%%%%%%%%%%%%%%%%%%%%%%%%%%%%%%%%%%%%

In this paper, we consider the \emph{semi-random process} suggested by Peleg Michaeli (see~\cite{beneliezer2020fast} and~\cite[Acknowledgements]{beneliezer2019semirandom}), formally introduced in~\cite{beneliezer2019semirandom}, and studied recently in \cite{beneliezer2020fast,gao2022fully,frieze2022hamilton,gao2020hamilton,macrury2022sharp,ham_cycles_preprint,gamarnik2023cliques,Harjas}. This process can be viewed as a ``one player game''.

%%%%%%%%%%%%%%%%%%%%%%%%%%%%%%%%%%%%%%%%%%%%%%%%%%%%%%%%%%%
\subsection{Definitions}\label{definitions}
%%%%%%%%%%%%%%%%%%%%%%%%%%%%%%%%%%%%%%%%%%%%%%%%%%%%%%%%%%%

The process starts from $G_0$, the empty graph on the vertex set $[n]:=\{1,2,\ldots,n\}$ where $n \ge 1$. In each step $t\geq 1$, a vertex $u_t$ is chosen uniformly at random from $[n]$. Then, the player (who is aware of graph $G_{t-1}$ and vertex $u_t$) must select a vertex $v_t$ and add the edge $u_tv_t$ to $G_{t-1}$ to form $G_{t}$. The player aims to build a (multi)graph satisfying a given property $\mathcal{P}$ as quickly as possible.
It is convenient to think of $u_t$ as receiving a {\bf square}, and $v_t$ as receiving a {\bf circle}, so every edge in $G_t$ joins a square with a circle. Equivalently, we may view $G_t$ as a directed graph where arcs go from $u_i$ to $v_i$, $i=1,\dots,t$.
To make the process well defined, we allow parallel edges (for example, if some vertex receives $n$ squares, a parallel edge is necessary).

A \emph{strategy} $\mathcal{S}$ is defined by specifying, for each $n \ge 1$, a sequence of functions $(f_{t})_{t=1}^{\infty}$, where for each $t \in \N$, $f_t(u_1,v_1,\ldots, u_{t-1},v_{t-1},u_t)$ is a distribution over $[n]$ that depends on the vertex $u_t$, and the history of the process up until step $t-1$. Then, $v_t$ is chosen according to this distribution. If $f_t$ is an atomic distribution, that is \emph{non-random}, then $v_t$ is fully determined by $u_1,v_1, \ldots ,u_{t-1},v_{t-1},u_t$.
%Observe that this means that the player needs to select her strategy in advance, before the game actually starts.
We denote by $(G_{i}(n)[\mathcal{S}])_{i=0}^{t}$  the sequence of random (multi)graphs obtained by following the strategy $\mathcal{S}$ for $t$ rounds; we shorten $G_{t}(n)[\mathcal{S}]$ to $G_t$ or $G_{t}(n)$ when clear.
%\pt{The strategies discussed in this paper create graphs but we allow multigraphs to be created during the process to make sure it is well defined (for example, in the case when some vertex receives $n$ squares). It will be also useful in some of our proofs.}%We will also allow loops, as this will be useful in some proofs.

 %The definition of $\tau_{\mathcal{P}_G}(q,n)$ introduced in~(\ref{eq:Tpq}) still applies but, since one may a.a.s.\ create $G$ in $o(n)$ rounds, we need to adjust the definition of $\tau_{\mathcal{P}_G}$ \pt{to capture the exponent instead of the constant}; see~(\ref{eq:Tp}). This time,
Suppose $\mathcal{P}$ is a monotonically increasing property of graphs. We say that a function $\tau_{\mathcal{P}}(n)$ is \emph{a threshold for $\mathcal{P}$} if the following two conditions hold:
\begin{itemize}
\item[(a)] there exists a strategy $\mathcal{S}$ so that if
$t:=t(n)\gg \tau_{\mathcal{P}}(n)$, then
\newline
$
{\lim_{n \to \infty} \Prob[ G_t \in \mathcal{P} ] = 1,\;}
$
and
\item[(b)] for every strategy~$\mathcal{S}$, if $t:=t(n)=o( \tau_{\mathcal{P}}(n))$, then
$
\lim_{n \to \infty} \Prob[ G_t \in \mathcal{P} ] = 0.
$
\end{itemize}
\noindent Here and throughout we write $a_n\gg b_n$ if $b_n=o(a_n)$.
We also say that an event holds \emph{asymptotically almost surely} (\emph{a.a.s.}) if it holds with probability tending to one as $n \to \infty$.
%%%%%%%%%%%%%%%%%%%%%%%%%%%%%%%%%%%%%%%%%%%%%%%%%%%%%%%%%%%
\subsection{Main Result}\label{main result}
%%%%%%%%%%%%%%%%%%%%%%%%%%%%%%%%%%%%%%%%%%%%%%%%%%%%%%%%%%%

In this paper, we focus on the problem of constructing a sub-graph isomorphic to an arbitrary, fixed graph $H$.
Let $\mathcal{P}_H$ be the property that $H \subseteq G_t$.
It turns out that the threshold $\tau_{\mathcal{P}_H}$ can be determined in terms of the degeneracy of $H$.
% Suppose $\mathcal{P}$ is a monotonically increasing property of $s$-uniform hypergraphs. Given a strategy $\mathcal{S}$, integer $1\le r\le s-1$,
% and a constant $0<q<1$, let
% $\tau^{(r)}_{\mathcal{P}}(\mathcal{S},q,n)$
% be the minimum $t \ge 0$ for which
% $\Prob[H^{(r,s)}_{t} \in \mathcal{P}] \ge q$,
% where $\tau^{(r)}_{\mathcal{P}}(\mathcal{S},q,n):= \infty$ if no such $t$ exists. Define
% \begin{equation}\label{eq:hypergraph_Tpq}
% \tau^{(r)}_{\mathcal{P}}(q,n) = \inf_{ \mathcal{S}} \tau^{(r)}_{\mathcal{P}}( \mathcal{S},q,n),
% \end{equation}
% where the infimum is over all strategies on $[n]$. Observe that for each $n \ge 1$, if $0 \le q_{1} \le q_{2} \le 1$, then $\tau^{(r)}_{\mathcal{P}}(q_1,n) \le \tau^{(r)}_{\mathcal{P}}(q_2,n) $, as $\mathcal{P}$ is monotonically increasing. Thus, the function $q\mapsto \limsup_{n\to\infty} \ln(\tau^{(r)}_{\mathcal{P}}(q,n))$ is non-decreasing,
% and so the limit
% $$
% \tau^{(r)}_{\mathcal{P}}:=\lim_{q\to 1^-}\limsup_{n\to\infty} \frac{ \ln ( \tau^{(r)}_{\mathcal{P}}(q,n) ) }{\ln n}
% $$
% is guaranteed to exist (with the convention that it can be equal to $\infty$). The goal is typically to compute upper and lower bounds on $\tau^{(r)}_{\mathcal{P}}$ for various properties $\mathcal{P}$.
% In this paper, we focus on the problem of constructing a sub-hypergraph isomorphic to an arbitrary, fixed hypergraph $G$.
% Let $\mathcal{P}_G$ be the property that $G \subseteq H^{(r,s)}_t$.

For a given $d \in \N$, a graph $H$ is  \emph{$d$-degenerate} if every sub-graph $H'\subseteq H$ has minimum degree $\delta(H')\le d$. The \emph{degeneracy} of $H$ is the smallest value of $d$ for which $H$ is $d$-degenerate.
It was proved in~\cite{beneliezer2019semirandom} that for any graph $H$  of degeneracy $d \in \N$, $\tau_{\mathcal{P}_H} \le n^{(d-1)/d}$.
\begin{theorem}[\hspace{1sp}\protect{\cite[Theorem~1.10]{beneliezer2019semirandom}}]\label{thm:upper_bound}
Let $H$ be a fixed graph of degeneracy $d \in \N$. Then, there exists a strategy $\mathcal{S}$ so that whenever $t\gg n^{(d-1)/d}$,
$$
\lim_{n \to \infty} \Prob[ G_t \in \mathcal{P}_H ] = 1.
$$
 %and $\omega = \omega(n)$ is any function tending to infinity as $n \to \infty$.
\end{theorem}
\noindent For completeness and as a warm-up, we re-prove this theorem in Section~\ref{upper}.

Note that for $d=1$, that is, when $H$ is a forest, Theorem \ref{thm:upper_bound} implies immediately that $\tau_{\mathcal{P}_H} = 1$.
For $d\ge2$, it was proved in~\cite{beneliezer2019semirandom} that $\tau_{\mathcal{P}_H} = n^{(d-1)/d}$  when $H=K_{d+1}$, the complete graph on $d+1$ vertices, and conjectured that the equality holds for all graphs of degeneracy $d$. As our main result, we prove this conjecture here.
%In Section~\ref{lower}, we prove that their result is best possible in the following sense.

\begin{theorem}\label{thm:lower_bound}
Let $H$ be a fixed graph of degeneracy $d \ge 2$. Then, for any strategy~$\mathcal{S}$, if  $t =o( n^{(d-1)/d} )$, then
$$
\lim_{n \to \infty} \Prob[ G_t \in \mathcal{P}_H ] = 0.
$$
 %and $\omega = \omega(n)$ is any function tending to infinity as $n \to \infty$.
\end{theorem}

Combining Theorems~\ref{thm:upper_bound} and~\ref{thm:lower_bound} we get the following corollary.
\begin{corollary}
Let $H$ be a fixed graph of degeneracy $d \in \N$. Then, $\tau_{\mathcal{P}_H} = n^{(d-1)/d}$.
\end{corollary}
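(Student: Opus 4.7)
The plan is to reduce to showing $\Prob[G^* \subseteq H_t] \to 0$, where $G^*$ denotes the $d$-core of $G$. Since $G$ has degeneracy $d \geq 2$, $G^*$ is non-empty with minimum degree at least $d$, and $G \subseteq H_t$ implies $G^* \subseteq H_t$. Applying the first moment method, I bound $\Prob[G^* \subseteq H_t] \leq \E[X]$, where $X$ counts labeled copies of $G^*$ in $H_t$. Each labeled copy is specified by an embedding $\phi: V(G^*) \to [n]$, an orientation $\vec{G}^*$ of $G^*$ (induced by which endpoint of each edge is the square versus the circle), and an injective assignment $\beta: E(G^*) \to [t]$ of edges of $G^*$ to rounds, so $\E[X] \leq \sum_{\phi, \vec{G}^*, \beta} \Prob[\beta\text{-event}]$.

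For each fixed triple $(\phi, \vec{G}^*, \beta)$, I process rounds in time order and track which vertices of $G^*$ have appeared so far. At each round, the random square $u_s$ is either the first appearance of the corresponding source vertex (in which case it defines $\phi(v)$, and summing over $\phi$ costs nothing) or forces $u_s=\phi(v)$ for a previously-defined vertex, contributing a factor of $1/n$ by independence of the $u$'s. The circles $v_s$ are determined by the strategy; a first target-appearance defines $\phi(w)$, while a later target-appearance yields a $0/1$ consistency check which we bound by $1$. This gives $\Prob[\beta\text{-event}] \leq n^{-(m-|F|)}$, where $m = |E(G^*)|$, $k = |V(G^*)|$, and $|F|$ counts vertices whose first appearance is as a source; since the very first round introduces at least one target, $|F| \leq k-1$.

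The main obstacle is that this crude bound, summed over $\beta$, gives roughly $O(t^m/n^{m-k+1})$, which is insufficient when $m$ is close to $k$ (as for $C_4$ with $d=2$, $m=k=4$). Two further ideas handle this. First, any orientation $\vec{G}^*$ with some out-degree $\geq d$ requires $\phi(v)$ to be sampled as $u_s$ at least $d$ times, but $\E[|\{v : S_v \geq d\}|] \leq n(t/n)^d/d! = O(1/\omega^d) = o(1)$, so a.a.s.\ no copy with such an orientation exists. Second, for orientations with max out-degree $\leq d-1$ (possible only for sparse $G^*$ such as $K_{d+1}$ and its cyclic-like analogues), I proceed by induction on $|V(G^*)|$, taking as base case $G^* = K_{d+1}$ which is covered in \cite{beneliezer2019semirandom}. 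In the inductive step, I pick a vertex $v \in V(G^*)$ of degree exactly $d$ in $G^*$ (which exists since $G^*$ is a $d$-core) and bound $\E[X]$ by an extension argument: for each partial embedding of $G^* - v$, the expected number of valid placements of $\phi(v)$ realizing the $d$ required incident edges is at most $n \cdot (2t/n)^d = O(1/\omega^d)$, since each such edge lies in $H_t$ with probability at most $2t/n$. The subtlest point is that $G^* - v$ need not itself be a $d$-core, so the inductive statement must be formulated carefully---for instance, as a bound on $\E[\#\text{copies of } H]$ for all $H$ with $|V(H)| < |V(G^*)|$ and degeneracy at most $d$---and the ``peeling'' used to reach the inductive subproblem must be performed along a degeneracy ordering rather than by removing a single vertex, so that the combinatorial structure remains intact at every step.
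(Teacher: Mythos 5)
Your overall architecture matches the paper up to a point: the corollary needs both the upper bound (Theorem~\ref{thm:upper_bound}, quoted from \cite{beneliezer2019semirandom}) and the lower bound, and for the latter your reduction to the $d$-core, the union bound over the constantly many orientations and edge orderings, and the disposal of orientations containing a vertex of out-degree at least $d$ (this is Lemma~\ref{lem:pick_same_vertex}(b)) are all as in the paper. The genuine gap is in the remaining case of orientations with maximum out-degree at most $d-1$, which is exactly the hard case the paper isolates (the Eulerian orientation of $C_4$). Your inductive extension step does not close. Concretely, take $G^*=C_4$, $d=2$, $t=\sqrt n/\omega$, and $v$ a vertex of degree $2$: then $G^*-v$ is the two-edge path, and a player who stars all $t$ edges at one vertex produces $\Theta(t^2)=\Theta(n/\omega^2)$ copies of it; multiplying by your per-copy extension estimate $n(2t/n)^2=O(1/\omega^2)$ gives $\Theta(n/\omega^4)$, which is not $o(1)$. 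This is structural, not an artifact of the example: peeling along a degeneracy ordering and charging each vertex of back-degree $j$ a factor $n(2t/n)^{j}$ just reproduces the independent-edges bound $n^k(2t/n)^m$, and since a $d$-core may have as few as $dk/2$ edges this only certifies a threshold exponent of $1-2/d$, strictly below the claimed $1-1/d$ for every $d\ge2$. A second, independent problem is that ``(copies of $G^*-v$) times (expected extensions)'' tacitly treats the two events as uncorrelated, whereas the player is adaptive and --- as the proof of the upper bound shows --- the optimal strategy is precisely to build the partial copy first and then aim every circle at extending it.

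What is missing is a mechanism that charges the $d$ required ``square events'' even when they are spread along a directed path rather than concentrated at one vertex. The paper supplies this with the weight function $w_G$ (which increments at the source of each newly added edge and then propagates forward along directed paths) together with the sets $S(v,G_i;t)$ of possible images of a \emph{single} vertex: Lemma~\ref{lem:weight_>_min_deg} shows every vertex of the $d$-core has weight at least $d$ under any orientation and edge order, and Lemma~\ref{lem:weight_related_to_H} shows $\E|S(v,G;t)|=O(t^w/n^{w-1})$ for a vertex of weight $w$. Your idea~1 recovers the case $w_G(v)\ge d$ forced by a single high out-degree vertex; the weight function is what handles the cyclically oriented case, and your proposal has no substitute for it.
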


%%%%%%%%%%%%%%%%%%%%%%%%%%%%%%%%%%%%%%%%%%%%%%%%%%%%%%%%%%%
\subsection{Background}\label{background}
%%%%%%%%%%%%%%%%%%%%%%%%%%%%%%%%%%%%%%%%%%%%%%%%%%%%%%%%%%%

The semi-random process was also recently studied in the context of perfect matchings~\cite{gao2022perfect} and Hamilton cycles~\cite{gao2020hamilton,gao2022fully,frieze2022hamilton,ham_cycles_preprint}. For both structures, since the goal is to create a spanning subgraph with bounded maximum degree, the length of the process leading to constructing them must be, trivially, of order $\Omega(n)$. In all above papers, a matching bound of $O(n)$ is established, however, the multiplicative constants were not determined precisely. Cliques, chromatic number, and independent sets were considered in~\cite{gamarnik2023cliques}.

Perfect matchings and Hamilton cycles are just two special cases of the property of containing a given graph $H_n$ as a spanning subgraph.
As was reported in~\cite{beneliezer2020fast}, Noga Alon asked, more generally, whether for any fixed sequence of graphs $H_n$ with maximum degree $\Delta(H_n)\le\Delta$ for all $n$ and $H_n$ containing at most $n$ vertices, one can construct a copy of $H_n$ in $G_t$ on $n$ vertices a.a.s.\ for $t=O(n)$.  This question was answered positively in a strong sense in~\cite{beneliezer2020fast}, where it was shown that such an $H_n$ can be constructed a.a.s.\ in $(3\Delta/2+o(\Delta))n$ rounds. They also proved that if $\Delta \gg \log(n)$, then this upper bound improves to $(\Delta/2+o(\Delta))n$ rounds. Note that this result applies to fixed subgraphs too, but this bound is far too weak.
Indeed, we will show that the property of containing a fixed subgraph has a threshold of order $o(n)$. Consequently, we will be interested in finding the correct exponent of $n$ rather than multiplicative constants.

The semi-random process may be extended or generalized in various ways. For example, in~\cite{gilboa2021semi} the authors consider a no-replacement variant of the process in which squares follow a permutation of vertices selected uniformly at random. Once each vertex is covered with a square, another random permutation is drawn, and the process continues. Another variant was studied in~\cite{burova2022semi} in which a random spanning tree of $K_n$ is presented to the player who can keep one of the edges. In~\cite{Harjas}, the process presents $k$ squares, and to create an edge the player selects one of them, and freely chooses a circle to connect to.

%%%%%%%%%%%%%%%%%%%%%%%%%%%%%%%%%%%%%%%%%%%%%%%%%%%%%%%%%%%
\subsection{Hypergraphs}\label{main_results_hypergraphs}
%%%%%%%%%%%%%%%%%%%%%%%%%%%%%%%%%%%%%%%%%%%%%%%%%%%%%%%%%%%

In this paper, we propose a natural generalization of the semi-random process to hypergraphs (cf.~\cite{molloy2023matchings}). Fix $r \ge 1$ to be the number of randomly selected vertices per step, and $s \ge r$ to be the uniformity of the hypergraph. The process starts from $G^{(r,s)}_0$, the empty hypergraph on the vertex set $[n]$, where $n \ge 1$. In each step $t\geq 1$, a set $U_t$ of $r$ vertices is chosen uniformly at random from $[n]$. Then, the player replies by selecting a set of $s-r$ vertices $V_t$ and the edge $U_t \cup V_t$ is added to $G^{(r,s)}_{t-1}$ to form $G^{(r,s)}_{t }$. We assume that $U_t$ and $V_t$ are disjoint so that the resulting hypergraph is an $s$-uniform hypergraph, or shortly an \emph{$s$-graph}. As was the case with graphs, in order for the process to be well defined
we will allow parallel edges.

If $r=1$ and $s=2$, then this is the semi-random graph process described above. On the other hand, if $r=s$ (that is, the player chooses $V_t=\emptyset$ for all $t$), then $G_t^{(r,r)}$ is just a uniform random $r$-graph process with $t$ edges selected with repetitions.

In this paper, we will focus on the case where $r=1$. In this case, at each step one vertex is randomly selected and the player chooses $s-1$ vertices. For simplicity, we will refer to the $s$-graph $G_t^{(1,s)}$ simply as  $G_t^{(s)}$.

%and overlapping pairs $(U_t,V_t)$, which are a natural generalization of loops in the case of graphs.  (Of course, for most hypergraph properties, creating loops or parallel edges is not part of an optimal strategy.)
%Note that  all edges of the resulting hypergraph contain at most $s$ vertices. The hypergraph can be thought of as $s$-uniform provided that the edges $U_t \cup V_t$ are treated as multisets in the natural way.

As before, the goal of the player is to build an $s$-graph $G_t^{(s)}$ satisfying a given property $\mathcal{P}$ as quickly as possible, and we focus on the property $\mathcal{P}_H$ of possessing a sub-$s$-graph isomorphic to an arbitrary, fixed $s$-graph $H$.
We define strategies and the threshold $\tau_{\mathcal{P}_H}$ identically to the graph case.

In Section~\ref{sec:hypergraphs r=1} we show that for uniform hypergraphs, the case $r=1$ resembles the graph case and the degeneracy of an $s$-uniform hypergraph $H$ is still the only parameter that affects the threshold for the property $H \subseteq G_t^{(s)}$. As for graphs, for a given $d \in \N$, a hypergraph $H$ is  \emph{$d$-degenerate} if every sub-hypergraph $H'\subseteq H$ has minimum degree $\delta(H')\le d$ (where the minimum degree of a hypergraph is the minimum degree over all vertices).
The \emph{degeneracy} of $H$ is the smallest value of $d$ for which $H$ is $d$-degenerate.

In particular, we have the following theorems that are counterparts of Theorem~\ref{thm:upper_bound} and Theorem~\ref{thm:lower_bound}.

\begin{theorem}\label{thm:hypergraph_r=1_upper_bound}
Let $r=1$, $s \ge 2$, and let $H$ be a fixed $s$-uniform hypergraph of degeneracy $d \in \N$. Then, there exists a strategy $\mathcal{S}$ so that whenever $t \gg n^{(d-1)/d} $,
$$
\lim_{n \to \infty} \Prob[ G_t^{(s)} \in \mathcal{P}_H ] = 1.
$$
%where  and $\omega = \omega(n)$ is any function tending to infinity as $n \to \infty$.
\end{theorem}

\begin{theorem}\label{thm:hypergraph_r=1_lower_bound}
Let $r=1$,  $s \ge 2$, and let $H$ be a fixed $s$-uniform hypergraph of degeneracy $d \ge 2$. Then, for any strategy $\mathcal{S}$, if $t = o( n^{(d-1)/d})$, then
$$
\lim_{n \to \infty} \Prob[ G_t^{(s)} \in \mathcal{P}_H ] = 0.
$$
%where  and $\omega = \omega(n)$ is any function tending to infinity as $n \to \infty$.
\end{theorem}

 As a result, combining Theorems~\ref{thm:hypergraph_r=1_upper_bound} and~\ref{thm:hypergraph_r=1_lower_bound}, we get the following corollary.
\begin{theorem}\label{thm:any_hypergraph}
Let $r=1$ and let $H$ be a fixed $s$-uniform hypergraph of degeneracy $d \in \N$. Then, $\tau_{\mathcal{P}_H} = n^{(d-1)/d}$.
\end{theorem}
\noindent The proofs of these results follow the same approach as in the graph case.

\medskip

\subsection{Organization}
In the next section we prove Theorems \ref{thm:upper_bound} and \ref{thm:lower_bound}, while in Section \ref{sec:hypergraphs r=1} we concentrate on Theorems~\ref{thm:hypergraph_r=1_upper_bound} and~\ref{thm:hypergraph_r=1_lower_bound}. %Section \ref{sec:hypergraphs r>1}  contains the proofs of Theorems \ref{thm:lower_bound_hypergraphs} and \ref{thm:upper_bound_hypergraphs_general} as well as some further results on hypergraphs, $r\ge2$.
The last section presents a number of open problems, including the problem of the hypergraph case when the number of randomly selected vertices $r$ satisfies $1 < r < s$. Some further results on this case will be presented in a follow-up paper.

\section{Proofs for graphs} \label{sec:graphs}

\subsection{Outline} First, in Subsection \ref{upper}, we prove Theorem \ref{thm:upper_bound} which sets an upper bound on $\tau_{{\mathcal P}_H}$. Then, in Subsection \ref{lower}, a proof of Theorem \ref{thm:lower_bound} is given that provides a matching lower bound. A probabilistic lemma, established in Subsection \ref{uselem}, is utilized in both proofs. The proof of Theorem~\ref{thm:lower_bound} is much more involved and requires an auxiliary notion of vertex weighting $w$. In Lemma~\ref{lem:weight_>_min_deg} we show that the weights are bounded from below by $\delta(H)$. Then, in Lemma~\ref{lem:weight_related_to_H}, we bound the number of possible images of a vertex $v$ of $H$ in $G_t$ in terms of $w(v)$. Combined, these two lemmas yield the proof of Theorem~\ref{thm:lower_bound}. Before all that, we include a brief compendium on the degeneracy of graphs and hypergraphs.

%%%%%%%%%%%%%%%%%%%%%%%%%%%%%%%%%%%%%%%%%%%%%%%%%%%%%%%%%%%
\subsection{Degeneracy}\label{degeneracy}
%%%%%%%%%%%%%%%%%%%%%%%%%%%%%%%%%%%%%%%%%%%%%%%%%%%%%%%%%%%
We start with some useful basic facts about degeneracy.
Recall that for a given $d \in \N$, a hypergraph $H$ is  \emph{$d$-degenerate} if every sub-hypergraph $H'\subseteq H$ has minimum degree $\delta(H')\le d$.% (where the minimum degree of a hypergraph is the minimum degree over all vertices).
 The \emph{degeneracy} of $H$ is the smallest value of $d$ for which $H$ is $d$-degenerate.

The \emph{$d$-core} of a hypergraph $H$ is the maximal (with respect to inclusion) induced subgraph $H'\subseteq H$ with minimum degree $\delta(H')\ge d$. (Note that the $d$-core is well defined, though it may be empty. Indeed, if $S \subseteq V(H)$ and $T \subseteq V(H)$ induce sub-hypergraphs with minimum degree at least $d$, then the same is true for $S \cup T$.) If $H$ has degeneracy $d$ then it has a non-empty $d$-core. Indeed, by definition, $H$ is \emph{not} $(d-1)$-degenerate and so it has a sub-hypergraph $H'$ with  $\delta(H')\ge d$.
We immediately get that if $H$ has degeneracy $d$, then there exists an ordering of the vertices of $H$, $(v_1, v_2, \ldots , v_k)$, such that for each $\ell \in [k]$ vertex $v_\ell$ has degree at most $d$ in the sub-hypergraph induced by the set $\{v_1, v_2, \ldots, v_{\ell}\}$.

 For graphs, this implies a useful reformulation of degeneracy: a graph $H$ is $d$-degenerate if and only if the edges of $H$ can be oriented to form a directed acyclic graph $D$ with maximum out-degree at most $d$. In other words, there exists a permutation of the vertices of $H$, $(v_1, v_2, \ldots, v_k)$, such that for every directed edge $(v_i,v_j)\in D$ we have  $i > j$ and the out-degrees are at most $d$.
For example, the degeneracy of the complete graph $K_k$ is $k-1$, and any acyclic tournament embodies the aforementioned orientation.
%with vertices that are labelled $v_1,\ldots, v_k$ and each edge $v_iv_j$ with $i > j$ oriented from $v_i$ to $v_j$.  The resulting directed clique is acyclic and the out-degrees are clearly at most $k-1$, the degeneracy of $K_k$.

%%%%%%%%%%%%%%%%%%%%%%%%%%%%%%%%%%%%%%%%%%%%%%%%%%%%%%%%%%%
\subsection{Useful Lemma }\label{uselem}
%%%%%%%%%%%%%%%%%%%%%%%%%%%%%%%%%%%%%%%%%%%%%%%%%%%%%%%%%%%
Let us first state the following simple but useful lemma.
 The proofs of Theorems \ref{thm:upper_bound} and \ref{thm:lower_bound}, as well as (since the lemma does not depend on~$s$) those of Theorems \ref{thm:hypergraph_r=1_upper_bound} and \ref{thm:hypergraph_r=1_lower_bound}, will rely on it.

 \begin{lemma}\label{lem:pick_same_vertex}
Let $t = o(n)$ and let $\omega=\omega(n)$ be any function tending to infinity as $n \to \infty$. Let $\varx \in \N$ and let $X_t^{(\varx)}$ be the number of vertices in $G_t$ with precisely $\varx$ squares on them, that is, the number of vertices in $G_t$ with out-degree $\varx$. Then the following holds:
\begin{itemize}
    \item [(a)] $\E X_t^{(\varx)}  = (1-o(1)) \frac {t^\varx} {\varx! n^{\varx-1}}$.
    \item [(b)] If $t = n^{(\varx-1)/\varx} / \omega$, then a.a.s.\ $X_t^{(\varx)} = 0$.
    \item [(c)] If $t = n^{(\varx-1)/\varx} \omega$, then for any $\vary \in \N$ a.a.s.\ $X_t^{(\varx)} \ge \vary$.
\end{itemize}
\end{lemma}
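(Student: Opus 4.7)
\medskip

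\noindent\textbf{Proof proposal for Lemma \ref{lem:pick_same_vertex}.}
The natural observable is, for each vertex $v \in [n]$, the number $S_v$ of times $v$ was chosen as a square during the first $t$ rounds. Since $u_1,\dots,u_t$ are i.i.d.\ uniform on $[n]$, one has $S_v \sim \Bin(t,1/n)$, and the random variable of interest is the sum of indicators
$$X_t^{(\varx)} \;=\; \sum_{v\in [n]} \mathbf{1}[S_v = \varx].$$
The plan is to compute the first moment for parts (a) and (b), and then do a second moment computation for part (c); the assumption $t = o(n)$ will make every binomial/multinomial factor that appears asymptotically equal to its leading-order expression.

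For part (a), I would write
$$\E X_t^{(\varx)} \;=\; n \binom{t}{\varx}\left(\frac{1}{n}\right)^{\varx}\left(1-\frac{1}{n}\right)^{t-\varx}$$
and then use that $\binom{t}{\varx} = (1-o(1)) t^{\varx}/\varx!$ since $\varx$ is fixed and $t\to\infty$, and that $(1-1/n)^{t-\varx} = 1-o(1)$ because $t=o(n)$. Multiplying these gives the claimed asymptotic $(1-o(1)) t^{\varx}/(\varx!\,n^{\varx-1})$. Part (b) then follows immediately: substituting $t = n^{(\varx-1)/\varx}/\omega$ yields $\E X_t^{(\varx)} = O(1/\omega^{\varx}) = o(1)$, so Markov's inequality gives $X_t^{(\varx)} = 0$ a.a.s. (In particular, $t=o(n)$ is satisfied since $\varx\ge 1$.)

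Part (c) is where the work is. Substituting $t = n^{(\varx-1)/\varx}\omega$ into (a) gives $\E X_t^{(\varx)} = (1-o(1))\omega^{\varx}/\varx! \to \infty$, so it suffices to show $\mathrm{Var}(X_t^{(\varx)}) = o((\E X_t^{(\varx)})^2)$, after which Chebyshev's inequality finishes the job. Writing
$$\mathrm{Var}(X_t^{(\varx)}) \;\le\; \E X_t^{(\varx)} + \sum_{u\neq v}\bigl(\Prob[S_u=\varx,\,S_v=\varx]-\Prob[S_u=\varx]\Prob[S_v=\varx]\bigr),$$
the first term is negligible against $(\E X_t^{(\varx)})^2$ since $\E X_t^{(\varx)}\to\infty$. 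For the covariance term, the joint distribution of $(S_u,S_v)$ is trinomial with cell probabilities $(1/n,1/n,1-2/n)$, so
$$\Prob[S_u=\varx,\,S_v=\varx] \;=\; \frac{t!}{\varx!\,\varx!\,(t-2\varx)!}\,n^{-2\varx}(1-2/n)^{t-2\varx}.$$
Comparing this to $\Prob[S_u=\varx]\Prob[S_v=\varx]$, the combinatorial factor ratio is $\prod_{i=1}^{\varx}\frac{t-\varx-i+1}{t-i+1} = 1+O(\varx^2/t)$ and the ratio of the tail factors is $1+O(t/n^2)$; since $\varx$ is fixed, $t\to\infty$, and $t = o(n)$, both corrections are $o(1)$. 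Thus each pairwise covariance is $o(\Prob[S_u=\varx]\Prob[S_v=\varx])$, and summing over ordered pairs $u\neq v$ gives total contribution $o((\E X_t^{(\varx)})^2)$, as required.

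The only real obstacle is the covariance estimate in part (c); the rest is bookkeeping. The computation there is entirely routine provided one consistently exploits that $\varx$ is a fixed integer and $t=o(n)$, so that both the combinatorial ratio $\binom{t}{\varx,\varx,t-2\varx}/\binom{t}{\varx}^2$ and the probability tail ratio $(1-2/n)^{t-2\varx}/(1-1/n)^{2(t-\varx)}$ tend to $1$. Once this is in hand, Chebyshev's inequality delivers $X_t^{(\varx)} \ge \vary$ a.a.s.\ for any fixed $\vary \in \N$.
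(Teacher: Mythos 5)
Your proof is correct and follows essentially the same route as the paper: first moment for (a) and (b), then Chebyshev's inequality for (c), with the key step in both cases being the computation of the joint probability $\Prob[S_u=\varx,\,S_v=\varx]$ for two distinct vertices. Your covariance decomposition $\Var(X)\le\E X + \sum_{u\ne v}\mathrm{Cov}$ is algebraically the same as the paper's factorial-moment identity $\Var(X)/(\E X)^2 = \E(X(X-1))/(\E X)^2 + 1/\E X - 1$ together with the observation that $\E(X(X-1))=\sum_{u\ne v}\Prob[S_u=\varx,S_v=\varx]$, so there is no substantive difference.
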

\begin{proof} Let $Y_t(i)$, $i=1,\dots,n$, be the number of  squares  on vertex $i$ in $G_t$. It follows from our random model that each $Y_t(i)$ is a random variable with binomial distribution $Bin(t,1/n)$. Note, however, that the random variables $Y_t(i)$ for $i=1,\dots,n$ are \emph{not} independent. The same observation applies to the indicator random variables $I_t(i)^{(\varx)}$, where for each $i=1,\dots n$, $I_t(i)^{(\varx)}=1$ if $Y_t(i)=\varx$ and 0 otherwise.
Thus
$$X_t^{(\varx)}=\sum_{i=1}^nI_t(i)^{(\varx)},$$
and, since $t=o(n)$ and $\varx$ is a constant, we immediately get that
\begin{eqnarray*}
\E X_t^{(\varx)}  &=& n \binom{t}{\varx} \left( \frac{1}{n} \right)^\varx \left(1- \frac{1}{n} \right)^{t-\varx} \\
&=& (1-o(1)) \frac {nt^\varx}{\varx! n^\varx} \exp \left( - \frac {t-\varx}{n} + O( t/n^2 ) \right) \\
&=& (1-o(1)) \frac {t^\varx}{\varx! n^{\varx-1}}.
\end{eqnarray*}
If $t = n^{(\varx-1)/\varx} / \omega$, then $\E X_t^{(\varx)}  = o(1)$ and so a.a.s.\ $X_t^{(\varx)} = 0$ by the first moment method. On the other hand, if $t = n^{(\varx-1)/\varx} \omega$, then $\E X_t^{(\varx)}  \sim\omega^\varx/x!\to \infty$ as $n \to \infty$.
Set $X:= X_t^{(\varx)}$ for convenience. To turn the above estimate of expectation $\E X$ into the desired lower bound on $X$ itself, we are going to apply the second moment method, or Chebyshev's inequality, with the variance  expressed in terms of the second factorial moment (this form fits well the cases when all summands constituting $X$ are pairwise  dependent):
\begin{equation}\label{2ndMM}
\Prob\left(|X-\E X|>\frac12\E X\right)\le\frac{4\mathrm{Var}(X)}{(\E X)^2}=4\left(\frac{\E(X(X-1))}{(\E X)^2}+\frac1{\E X}-1\right).
\end{equation}
Since $\E X\to\infty$ as $n \to \infty$, it suffices to show that $\E(X(X-1))\sim(\E X)^2$. By symmetry,
$$\E(X(X-1))=n(n-1)\Prob(I_t(1)^{(\varx)}=I_t(2)^{(\varx)}=1),$$
while
$$\Prob(I_t(1)^{(\varx)}=I_t(2)^{(\varx)}=1)=\binom tx\binom{t-x}x\left(\frac1n\right)^{2x}\left(1-\frac2n\right)^{t-2x}\sim\frac{t^{2x}}{x!^2n^{2x}},$$
and, thus, indeed, $\E(X(X-1))\sim(\E X)^2$. Consequently, a.a.s.\ $X_t^{(\varx)} \ge\omega^\varx/(3x!)$ and, in particular, $X_t^{(\varx)} \ge \vary$, regardless of the value of $\vary \in \N$.
%Chebyshev's inequality, $\mathbb{P}(|X_t^{(\varx)}-\mathbb{E}[X_t^{(\varx)}]| \geq \lambda \sigma) \leq 1/\lambda^2$ with $\sigma=(\mathrm{Var}(X_t^{(\varx)}))^{1/2}$, yields $\mathbb{P}(X_t^{(\varx)} \leq \mathbb{E}[X_t^{(\varx)}]/2) \leq 4/\mathbb{E}[X_t^{(\varx)}]$ when we take $\lambda=(\mathbb{E}(X_t^{(\varx)}))^{1/2}/2$ and note that $\mathrm{Var}(X_t^{(\varx)}) \leq \mathbb{E}[X_t^{(\varx)}]$, showing that  a.a.s.\ $X_t^{(\varx)} \ge \vary$, regardless of the value of $\vary \in \N$. %\todo{Expand on use of second moment method in proof of Lemma~\ref{lem:pick_same_vertex} }
\end{proof}

\bigskip

%%%%%%%%%%%%%%%%%%%%%%%%%%%%%%%%%%%%%%%%%%%%%%%%%%%%%%%%%%%
\subsection{Upper Bound }\label{upper}
%%%%%%%%%%%%%%%%%%%%%%%%%%%%%%%%%%%%%%%%%%%%%%%%%%%%%%%%%%%

In this section, we will re-prove Theorem~\ref{thm:upper_bound}. We do it for completeness as well as to highlight challenges in proving the lower bound.

%Now we are ready to re-prove the result from~\cite{beneliezer2019semirandom}.

\begin{proof}[Proof of Theorem~\ref{thm:upper_bound}]
%Note that if $G$ has degeneracy equal to one, then it is a forest which can be constructed very quickly. Indeed, it is easy to see that a.a.s.\ \pt{no square lands on any of the earlier squares or circles during the first $m=|E(G)|$ rounds, and so a.a.s.\ } it takes exactly $m$ rounds to achieve it, which is also a trivial lower bound. Hence, we may assume that $d \ge 2$.
Let $H$ be a graph on $k$ vertices $V(H)=\{v_1, v_2, \ldots, v_k\}$ of degeneracy $d \in \N$. As such, we may assume that for each $\ell \in [k]$, vertex $v_\ell$ has at most $d$ neighbours among $\{v_1, v_2, \ldots, v_{\ell-1}\}$. We orient edges of $H$ so that for all edges $v_iv_j$ it holds that $j < i$. As a result, the maximum out-degree is equal to $d$.

The player can create the oriented graph $H$ in $t \gg n^{(d-1)/d}$ rounds by using the following simple strategy. The process is divided into $k$ phases labelled with $\ell \in [k]$, each consisting of $t/k$ rounds. We proceed by an inductive argument. At the beginning of phase $\ell$, we assume that a copy of the induced subgraph $H[\{v_1, v_2, \ldots, v_{\ell-1}\}]$ has been already created in $G_{(\ell -1)t/k}$.

Note that the property is vacuously satisfied at the beginning of phase $1$. At the beginning of phase $2$, we may select any vertex to obtain a copy of $H[v_1]$. Therefore, let $\ell \ge 2$. Let us fix one such copy and let $u_i$ be the image of $v_i$, $i=1,\dots,\ell-1$ in that copy.

%For simplicity, we may ignore all edges that are not part of the copy of $G$; the player will not try to make use of them.

Let $N_{\ell} \subseteq \{v_1, v_2, \ldots, v_{\ell-1}\}$ be the neighbours of $v_{\ell}$ in $H$ that come earlier in the vertex ordering. By construction, $h:=|N_{\ell}| \le d$.
Let $w_1,\dots,w_h$ be the images of the vertices of $N_\ell$ in the fixed copy of $H[\{v_1, v_2, \ldots, v_{\ell-1}\}]$ in $G_{(\ell-1)t/k}$.

The goal of the player (in this phase) is to create an image $u_\ell$ of vertex $v_{\ell}$ that is adjacent to $w_1,\dots,w_h$. In order to achieve this, when some vertex receives its $i$th square during this phase, $1\le i\le h$, the player simply connects this vertex with $w_i$.
%It follows from Lemma~\ref{lem:pick_same_vertex}(c) with $\varx=d$ and $\vary=k$ that a.a.s.\ at least $k$ vertices receive $d$ squares during this phase and so, even if some of them turn out to be vertices $u_1,\dots,u_{\ell-1}$ (which does not happen a.a.s.), at the end of this phase, a.a.s.,  the fixed copy of $G[\{v_1, v_2, \ldots, v_{\ell-1}\}]$ can be extended to a copy of $G[\{v_1, v_2, \ldots, v_{\ell-1}v_\ell\}]$.
It follows from Lemma~\ref{lem:pick_same_vertex}(c) with $\varx=d$ and $\vary=k$ that a.a.s.\ at least $k$ vertices receive $d$ squares during this phase, in which case we can find such a vertex distinct from $u_1,\dots,u_{\ell-1}$. Therefore a.a.s.\ the fixed copy of $H[\{v_1, v_2, \ldots, v_{\ell-1}\}]$ can be extended to a copy of $H[\{v_1, v_2, \ldots, v_{\ell-1},v_\ell\}]$.
Since the number of phases is $k=O(1)$, a.a.s.\ a copy of $H$ is created in $k$ phases, and the proof is finished.
\end{proof}

%%%%%%%%%%%%%%%%%%%%%%%%%%%%%%%%%%%%%%%%%%%%%%%%%%%%%%%%%%%
\subsection{Lower Bound}\label{lower}
%%%%%%%%%%%%%%%%%%%%%%%%%%%%%%%%%%%%%%%%%%%%%%%%%%%%%%%%%%%

In this section, we prove the main result of this paper, Theorem~\ref{thm:lower_bound}. %The theorem holds even when the graph $H$ contains loops  or parallel edges. Indeed, for the purposes of the proof we will need this slightly stronger setting.

Let $H$ be a graph on $k$ vertices and $m$ edges that may contain loops and parallel edges. Fix an ordering of the edges $(e_1,e_2,\ldots,e_m)$ of $H$, and fix an orientation of each edge. We will analyze the probability of an oriented copy of $H$ arising in $G_t$ where the edges of $H$ are added to $G_t$ in this specified order and the edge orientations in $G_t$ (from squares to circles) respect the edge orientations of $H$. Since, for a fixed $H$, there is only a finite number of ways to order and orient the edges, we can sum these probabilities to get the desired bound on the occurrence of \emph{any} copy of $H$ in $G_t$. Later on, we will formally prove this simple observation. For now, let us restrict ourselves to a given orientation and a given order of edges, and assume that the player's goal is to create a copy of $H$ with these additional constraints.

\medskip

To highlight the main challenge in proving the result, consider a simple example with $H=C_4$, the cycle of length 4. If the cycle is oriented so that one of the vertices has out-degree 2, then it follows immediately from Lemma~\ref{lem:pick_same_vertex}(b) that a.a.s.\ one cannot accomplish the task in $o\left(\sqrt{n}\right)$ rounds, and we are done. However, if the cycle is oriented so that each vertex has out-degree 1, then no non-trivial bound can be deduced from the lemma.

 In order to deal with all possible scenarios, for a given orientation and order, we define a weight function on the vertices of the graph $H$. This function is meant to measure how much of the difficulty in creating a copy of $H$ hinges upon a given vertex.
   We will then show (see Lemma \ref{lem:weight_>_min_deg}) that if $H$ is $d$-degenerate, then \emph{all} vertices of its $d$-core $H'$ have weight at least $d$. On the other hand, we will show (see Lemma \ref{lem:weight_related_to_H}) that even if $H'$ contains just one vertex of weight at least $d$, then the expected number of copies of $H'$ in $G_t$ is $O( t^d / n^{d-1} )$, regardless of the strategy of the player. As a result, if $t=o(n^{(d-1)/d})$, then the expectation tends to zero, and the desired conclusion holds by the first moment method: a.a.s.\ there is no copy of $H'$, and thus of $H$, in $G_t$.% and so the same is true for $G$.

\medskip

In order to prove Lemma \ref{lem:weight_>_min_deg} it is helpful to allow directed graphs that contain loops, including potentially several loops on the same vertex, to make the inductive step work. We call such a graph \emph{loopy}.

As promised, we recursively define a weight function $w_H:V \rightarrow \N\cup\{0\}$ on the vertices of a loopy graph $H =(V,E)$ that is dependent on the edge order and orientation. Let $H_0$ be the edgeless graph on vertex set $V$ and define the weighting $w_{H_0}:V \rightarrow \N\cup\{0\}$ to be uniformly zero. For $1 \le i \le m = |E|$, let $H_i$ have vertex set $V$ and edge set $\{ e_1, e_2, \ldots,e_i \}$ (so $H_m = H$). In particular, $H_i$ is $H_{i-1}$ with edge $e_i$ added. Let $e_i$ be the directed edge $x_i \rightarrow y_i$ (where we may have $y_i = x_i$). Define $w_{H_i}:V \rightarrow \N\cup\{0\}$ by
$$
    w_{H_i}(x_i) = w_{H_{i-1}}(x_i) + 1
$$
and for all other vertices $v \in V$,
$$
    w_{H_i}(v) = \begin{cases} \max\{w_{H_i}(x_i), w_{H_{i-1}}(v)\} &\text{if $x_i \leadsto v$ in $H_i$} \\
     w_{H_{i-1}}(v) &\text{otherwise,}
     \end{cases}
$$
where $x_i \leadsto v$ denotes that there is a directed path from $x_i$ to $v$. See Figure~\ref{fig:weighting} for an example of the updating rule.
Note that for every $i$ the weights of vertices on a directed path in $H_i$ form a non-decreasing sequence and that for every $v\in V$ we have $w_i(v)\le i$.

\begin{figure}[ht]
    \centering
    \includegraphics[scale=1.1]{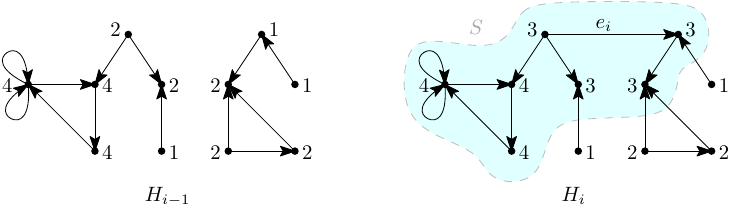}
    \caption{An example of a vertex weighting on $H_{i-1}$ and the updated weighting on $H_i$.}
    \label{fig:weighting}
\end{figure}

The weight of a vertex $v$ relates implicitly to the number of vertices that are images of $v$ in the copies of $H$ in $G_t$, where a higher weight means fewer copies (cf. Lemma \ref{lem:weight_related_to_H} below).
In particular, it counts how many times the random process must pick $v$ in order to create a copy of $H$ with an additional technical constraint that the weights cannot decrease along directed paths.
The intuition is as follows. Suppose there is a number of images of $x_i$ lying within copies of $H_{i-1}$ in $G_{t}$. Only a fraction of them will become an image of $x_i$ in a copy of $H_i$, as the random process must choose them (assign a square)  at a later time.
Similarly, as the player assigns only one circle at a time, the pool of images of $y_i$ in copies of $H_i$ will shrink as the process progresses. Hereditarily,  the same applies to the vertices further away from $x_i$ in $H_i$ but accessible from it by directed paths.

\begin{lemma}\label{lem:weight_>_min_deg}
If a loopy graph $H$ has minimum degree $\delta = \delta(H)$ (where a loop edge $v \rightarrow v$ contributes one to the degree of $v$), then $w_H(v) \ge \delta$ for every vertex $v$.
\end{lemma}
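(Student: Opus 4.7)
My plan is proof by contradiction. Suppose some vertex $v^*$ has $w_G(v^*) = w^* < \delta$, with $w^*$ minimum. I would first record two facts, each by a straightforward induction on the step $i$: (F1) weights are non-decreasing along directed paths in $G_i$, because the update rule propagates the value at the tail of the newest edge to all of its descendants; and (F2) $w_G(v) \geq \deg_G^+(v)$, because each outgoing edge of $v$ increments $w(v)$ by exactly one when processed, and $w(v)$ is non-decreasing.

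By (F1), every vertex of the in-closure $S := \{u : u \leadsto v^*\} \cup \{v^*\}$ satisfies $w_G(u) \leq w^*$, and the minimality of $w^*$ forces equality. The set $S$ is closed under in-neighbours (any predecessor of an ancestor of $v^*$ is itself an ancestor), so $\deg^-_{G[S]}(u) = \deg_G^-(u)$ for each $u \in S$. Combining with (F2), every such $u$ has $\deg_G^+(u) \leq w^* < \delta$ and hence $\deg_G^-(u) \geq \delta - w^* \geq 1$, so $G[S]$ has minimum in-degree at least one and therefore contains a directed cycle $C$; I would take $C$ of minimum length.

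Let $e^* = c \to c'$ be the last edge of $C$ to be added, at step $i^*$. The remaining $|C|-1$ edges of $C$ already form a directed path from $c'$ back to $c$ inside $G_{i^*-1}$, and by (F1) their weights along this path are non-decreasing and maximised at $c$. When $e^*$ is inserted, $w(c)$ jumps by one and, because every vertex of $C$ is now a descendant of $c$ in $G_{i^*}$, the update rule propagates this new value to all of $C$. Therefore $w_{G_{i^*}}(c_j) = w_{G_{i^*-1}}(c) + 1$ for every $c_j \in C$, and monotonicity in time together with $w_G(c_j) = w^*$ gives $w_{G_{i^*-1}}(c) \leq w^* - 1$.

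The main obstacle is producing the matching lower bound $w_{G_{i^*-1}}(c) \geq \delta - 1$, which would immediately yield the contradiction $\delta - 1 \leq w^* - 1 \leq \delta - 2$. My intended finish is an internal induction on $|E(G)|$ applied to $G - e^*$ (which has minimum degree at least $\delta - 1$), combined with the fact that $c$ has $\deg_G^-(c) \geq \delta - w^*$ in-neighbours in $S$, at most one of which lies on $C$. Each chord in-edge to $c$, when added at some step $s < i^*$, propagates the weight of its tail to $w(c)$ via (F1), and the minimality of $|C|$ prevents these chord tails from being hidden inside shorter cycles that would have already been processed. I expect the hardest point to be that the update rule is a maximum rather than a sum, so individual chord contributions do not automatically stack; the argument has to identify one chord whose tail has already accumulated weight at least $\delta - 1$ by the moment its edge is added, which is precisely where the closure properties of $S$ and the inductive control on $G - e^*$ are combined.
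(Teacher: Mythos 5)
Your setup is sound: facts (F1) and (F2) are correct, the in-closure $S$ is closed under in-neighbours and so retains its in-degrees, and (under the contradiction hypothesis) every vertex of $S$ has in-degree at least $\delta - w^* \ge 1$, forcing a directed cycle $C$ in $G[S]$. The derivation $w_{G_{i^*-1}}(c) \le w^* - 1$ at the step $i^*$ when the last edge $c \to c'$ of $C$ is added is also correct (it is a consequence of the tail-increment rule and monotonicity in time). However, as you acknowledge, the matching lower bound $w_{G_{i^*-1}}(c) \ge \delta - 1$ is not established, and the plan you sketch for it does not close the gap. Applying the lemma inductively to $G - e^*$ would give $w_{G - e^*}(c) \ge \delta - 1$, but this is the \emph{final} weight of $c$ in the game played on $G - e^*$, whereas you need to control $w$ at the \emph{intermediate} time $i^* - 1$. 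Under the natural coupling (same ordering, skip $e^*$) one has $w_{G_{i^*-1}}(c) \le w_{G - e^*}(c)$, which is the wrong direction. The suggested chord argument also lacks a reason why any of the $\ge \delta - w^* - 1$ chord in-edges to $c$ should be processed before step $i^*$; the cycle $C$ (and in particular $e^*$) could well be among the first edges processed, leaving $c$ with very little accumulated weight at time $i^* - 1$. You flag this as ``the hardest point,'' and it is indeed where the argument currently stands unproven and, in my view, where it would break.

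The paper's proof avoids cycles entirely and instead inducts on the number of edges $m$, focusing on the globally last edge $e_m = x_m \to y_m$ rather than on any cycle. Adding $e_m$ boosts the weights of the out-closure $S$ of $x_m$ to at least $\delta$ immediately (since by the inductive hypothesis on the graph with $m-1$ edges, $w_{G_{m-1}}(x_m) \ge \delta - 1$, so $w_{G_m}(x_m) \ge \delta$, and this propagates forward). For the complement $T = V \setminus S$ one observes there are no directed edges from $S$ into $T$, so the weight evolution on $T$ coincides with that of an auxiliary graph on $T$ (edges leaving $T$ become loops, edges inside $T$ are kept), which has strictly fewer edges and the same degrees on $T$; the inductive hypothesis then finishes the job. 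This sidesteps precisely the timing difficulty your plan runs into, because it never needs to lower-bound a weight at an \emph{interior} step of the process, only at the end, where the induction naturally applies. If you want to salvage your approach, I would suggest looking for an argument that either (i) shows the cycle $C$ can be chosen so that $e^*$ is essentially the last edge of the whole ordering restricted to $S$, or (ii) replaces the cycle decomposition with the paper's $S$/$T$ split, since the max-update rule is much better behaved under that kind of partition than under cycle chasing.
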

\begin{proof}
We prove the statement by induction on the number of edges $m = |E(H)|$. The base case is trivial. If $m = 0$, then $H= H_0$, $\delta(H_0)=0$, and all vertices have weight zero by definition.

For the inductive step, assume that $m \ge 1$ and the result holds for all graphs with fewer than $m$ edges. Let $\delta=\delta(H_m)$. Clearly, $H_{m-1}$ has minimum degree $\delta$ or $\delta - 1$. If $H_{m-1}$ has minimum degree $\delta$ then we are done, as $w_{H_m}(v) \ge w_{H_{m-1}}(v) \ge \delta$ for every vertex $v$.

Suppose then that $H_{m-1}$ has minimum degree $\delta - 1$ and that $w_{H_{m-1}}(v) \ge \delta - 1$ for all $v$. Let $e_m = x_m \rightarrow y_m$. We have $w_{H_m}(x_m) = w_{H_{m-1}}(x_m) + 1 \ge \delta$. Let $S$ be the set of all vertices $v$ with a directed path from $x_m$ to $v$ in $H_m$.
We know that $x_m \in S$ (there is a degenerate directed path from $x_m$ to $x_m$) and $w_{H_m}(v) \ge w_{H_m}(x_m) \ge \delta$ for any $v \in S$. If $S = V(H_m)$ then we are done, so suppose not and let $T = V(H_m) \setminus S$.

Note that by definition, there are no directed edges in $H_m$ from a vertex in $S$ to a vertex in $T$. Indeed, suppose that $e_i = x_i \rightarrow y_i$ with $x_i \not\in T$ and $y_i \in T$. Since $x_i \not\in T$, $x_m \leadsto x_i$ in $H_m$ and so also $x_m \leadsto y_i$.
We get $y_i \not\in T$ which gives us a contradiction (see Figure~\ref{fig:weighting}).
 We construct a series of auxiliary graphs $F_i$ for $i = 0,1,\ldots, m$ on vertex set $T$ as follows. Let $F_0$ be the empty graph on vertex set $T$. For each $1 \le i \le m$, consider the edge $e_i = x_i \rightarrow y_i$ in $H_m$.
\begin{itemize}
    \item If $x_i,y_i \in T$ then let $F_i$ be $F_{i-1}$ with edge $x_i \rightarrow y_i$ added.
    \item If $x_i \in T, y_i \not\in T$, then let $F_i$ be $F_{i-1}$ with loop edge $x_i \rightarrow x_i$ added (with multiplicity if $x_i\rightarrow x_i$ is already included as an edge).
    \item Otherwise, if $x_i \not\in T$, let $F_i = F_{i-1}$.
\end{itemize}
These graphs naturally inherit the edge ordering from $H_m$.

Since $x_m \not\in T$, $e_m = x_m \rightarrow y_m$ is not added to $F_m$ and so we know that $F_m$ has strictly fewer than $m$ edges. We also have that $d_{F_m}(v) = d_{H_m}(v)$ for all $v \in T$, since there are no edges $e_i = x_i \rightarrow y_i$ with $x_i \not\in T$ and $y_i \in T$.  Thus by the inductive hypothesis, the weighting $w_{F_m}:T\rightarrow \N\cup \{0\}$ has $w_{F_m}(v) \ge \delta$ for all $v \in T$.

On the other hand, we can also show inductively that $w_{F_i}(v) = w_{H_i}(v)$ for all $v \in T$ and all $i \le m$. This certainly holds for $i=0$.  Since there are no directed edges from $S$ to $T$ in $H_m$, the set of vertices $v \in T$ with $x_i \leadsto T$ is the same in both $H_i$ and $F_i$, for all $i$. Thus, if $e_i = x_i \rightarrow y_i$ with $x_i \in T$, we can see that for $v \in T$ with $x_i \leadsto v$, we have $w_{H_i}(v) = \max\{w_{H_{i-1}}(v), w_{H_{i-1}}(x_i) + 1 \} = \max\{w_{F_{i-1}}(v), w_{F_{i-1}}(x_i) + 1 \} = w_{F_i}(v)$ and for $v \in T$ with $x_i \not\leadsto v$, we have $w_{H_i}(v) = w_{H_{i-1}}(v) = w_{F_{i-1}}(v) = w_{F_i}(v)$. For the same reason, if $x_i \not\in T$, then adding edge $e_i$ to $H_{i-1}$ to get $H_i$ has no effect on vertex weights within $T$. In particular, we obtain $w_{H_m}(v) = w_{F_m}(v) \ge \delta$ for all $v \in T$ and, consequently, $w_{H_m}(v)\ge\delta$ for all
$v\in V(H_m)$, since we already had that $w_{H_m}(v) \ge \delta$ for $v \in S = V(H_m) \setminus T$. The proof of the lemma is finished.
\end{proof}

Recall that $G_t$ is the semi-random graph after $t$ time-steps. Before we can state our main lemma, we need to introduce a few more definitions. Let $H$ be an oriented graph with a fixed edge order $e_1, e_2, \ldots, e_m$. A homomorphism from $H$ to $G_t$ is a map that respects the edge orientations and edge ordering in the natural way. Formally, a \emph{homomorphism} from  $H$ to $G_t$ is an injective function $\phi: V(H) \rightarrow V(G_t)$ such that:
\begin{enumerate}[a]
    \item if $e = u \rightarrow v$ is a directed edge in $H$ then $\phi(u) \rightarrow \phi(v)$ is a directed edge in $G_t$ which we call $\phi(e)$; and
    \item for $i<j$, the  edge $\phi(e_i)$ was added to $G_t$ at an earlier time-step than the edge $\phi(e_j)$.
\end{enumerate}

For a vertex $v \in V(H)$, define $S(v,H;t)$ to be the set of vertices $u$ in $G_t$ for each of which there is a homomorphism $\phi$ from $V(H)$ to $V(G_t)$ such that $u=\phi(v)$.
Less formally, one can think of $S(v,H;t)$ as being the number of vertices within $G_t$ that look like an image of the vertex $v$ within some copy of the graph $H$.

We also need a notion of the diameter of an oriented graph. For any ordered pair of vertices $u,v \in H$ for which there exists a directed path from $u$ to $v$, let $d(u,v)$ be the length of the shortest such path. Define $diam(H)$ as the maximum value of $d(u,v)$ over all pairs $u,v$ with a directed path from $u$ to $v$. (We use the convention that $diam(H) = 0$ if $H$ is the empty graph.) Note that $diam(H) \le |V(H)| - 1$ and that $diam(H)$ is \emph{not} a monotone function of graphs.

\begin{lemma} \label{lem:weight_related_to_H}
Let $H$ be an oriented graph with a fixed edge order $(e_1,\dots,e_m)$, and let $w_H$ be the vertex weighting defined above. Then, for any strategy $\mathcal{S}$ of the player, any $t < n/2$, and any vertex $v \in V(H)$ with $w = w_H(v)$,
$$
\E \ |S(v,H;t)| \le \frac{t^{w}}{n^{w-1}}\left(2(m!)^{D} - 1 \right)
$$
where $D = \max_i\{diam(H_i)\} \le |V(H)| - 1$.
\end{lemma}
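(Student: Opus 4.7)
The plan is to prove the lemma by induction on $m=|E(G)|$. The base case $m=0$ is immediate, since all weights vanish, $D=0$, and $|S(v,G_0;t)|\le n=\frac{t^0}{n^{-1}}(2(0!)^0-1)$. For the inductive step, adjoin the last edge $e_m=x_m\to y_m$ and split into cases by how $v$ sits relative to $e_m$. If $v\ne x_m$ and $x_m$ does not reach $v$ in $G_m$ then $w_{G_m}(v)=w_{G_{m-1}}(v)$ and every $G_m$-homomorphism is a $G_{m-1}$-homomorphism, so $S(v,G_m;t)\subseteq S(v,G_{m-1};t)$; the inductive bound then suffices (using monotonicity of $D$ in $m$). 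The same monotone-containment reasoning disposes of the subcase $v\ne x_m$, $x_m\leadsto v$ in $G_m$, $w_{G_m}(v)=w_{G_{m-1}}(v)$.

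The core computation is the case $v=x_m$, where $w:=w_{G_m}(x_m)=w_{G_{m-1}}(x_m)+1$. Here I would use a ``last-edge'' argument: for each $u\in S(x_m,G_m;t)$, the image of $e_m$ under any witnessing homomorphism $\phi$ is added at some time $\tau\le t$ with $u_\tau=\phi(x_m)=u$, and restricting $\phi$ to $G_{m-1}$ witnesses $u\in S(x_m,G_{m-1};\tau-1)$. Hence
\[
|S(x_m,G_m;t)|\le \sum_{\tau=1}^{t}\mathbf{1}\bigl[u_\tau\in S(x_m,G_{m-1};\tau-1)\bigr].
\]
Taking expectations (using that $u_\tau$ is uniform over $[n]$ and independent of $H_{\tau-1}$) and applying the inductive hypothesis yields
\[
\E|S(x_m,G_m;t)|\le \sum_{\tau=1}^{t}\frac{\E|S(x_m,G_{m-1};\tau-1)|}{n}\le \frac{t^w}{n^{w-1}}\cdot \frac{2((w-1)!)^D-1}{w},
\]
and a one-line check confirms $\tfrac{2((w-1)!)^D-1}{w}\le 2(w!)^D-1$ for every $w\ge 1$ and $D\ge 0$.

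The remaining and most delicate case is $v\ne x_m$, $x_m\leadsto v$ in $G_m$, with $w_{G_m}(v)=w_{G_m}(x_m)$ strictly exceeding $w_{G_{m-1}}(v)$. I would fix a shortest directed path $x_m=z_0\to z_1\to\cdots\to z_\ell=v$ in $G_m$, so $\ell\le\mathrm{diam}(G_m)\le D$. For every $u\in S(v,G_m;t)$, a witnessing homomorphism $\phi$ produces a directed path $\phi(z_0)\to\cdots\to\phi(z_\ell)=u$ in $H_t$ whose starting vertex lies in $S(x_m,G_m;t)$. Combining the bound on $\E|S(x_m,G_m;t)|$ from the previous case with a careful accounting of the at-most-$w$-fold branching allowed at each of the $\ell\le D$ path extensions (enforced by the weight structure inherited from $G_{m-1}$) is intended to produce the $(w!)^D$ factor in the stated bound.

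The hard part will be this last subcase. Even though $S(v,G_m;t)\subseteq S(v,G_{m-1};t)$, the inductive hypothesis for $G_{m-1}$ alone falls short by a factor of $(t/n)^{w-w_{G_{m-1}}(v)}$, and this gap must be recovered from the additional ``last-edge'' constraint $\phi(x_m)=u_\tau$ for some $\tau\le t$ (providing an essentially $\sum_\tau 1/n=t/n$ probabilistic saving). Turning the path-length-$\ell$ branching heuristic into a rigorous estimate, and verifying that iterating it up to $D$ times does not exceed $(w!)^D$, is what forces the precise form $2(w!)^D-1$ of the constant in the statement (already visible in the $v=x_m$ computation) and is the technically involved step of the proof.
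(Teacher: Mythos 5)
Your overall decomposition is the same as the paper's: induct on the number of edges, reduce immediately to the case where the weight of $v$ strictly increases (using $S(v,G_m;t)\subseteq S(v,G_{m-1};t)$ otherwise), and settle the case $v=x_m$ with a ``last-edge'' bound $\E|S(x_m,G_m;t)|\le\sum_{\tau=1}^{t}\E|S(x_m,G_{m-1};\tau-1)|/n$. That part is correct (your $1/w$ factor is slightly sharper than the paper's, which simply uses $\sum_{\tau<t}\tau^{w-1}\le t^{w}$). The paper inducts on the index $i$ with $D$ held fixed at the global value, whereas you re-derive $D$ at each level of an induction on $m$; as you note, $D_m=\max_{i\le m}\mathrm{diam}(G_i)$ is non-decreasing in $m$, so that choice is harmless.

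The genuine gap is precisely the subcase you flag as ``the hard part'': $v\ne x_m$, $x_m\leadsto v$, $w_{G_m}(v)>w_{G_{m-1}}(v)$. Your sketch of ``at-most-$w$-fold branching at each of $\ell\le D$ path extensions'' is not a bound that holds deterministically, and the heuristic as stated does not survive scrutiny: a vertex of $H_t$ can be incident to arbitrarily many out-edges, so you cannot simply multiply $|S(u,G_m;t)|$ by $w$ at each step. What is needed (and what the paper does) is a second, inner induction on the distance $d_m(x_m,v)$, proving the strengthened claim
\[
\E|S(v,G_m;t)|\le\frac{t^{w}}{n^{w-1}}\left(2w^{\,d_m(x_m,v)}\bigl((w-1)!\bigr)^{D}-1\right),
\]
whose base case $d_m(x_m,v)=0$ is your $v=x_m$ computation. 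At the inductive step one takes the predecessor $u$ of $v$ on a shortest $x_m\to v$ path (so $w_{G_m}(u)=w$), bounds $|S(v,G_m;t)|$ by the number of $H_t$-edges that are images of $u\to v$, and \emph{partitions} $S(u,G_m;t)$ according to the number of such incident out-edges: the vertices incident to at most $w$ of them contribute at most $w|S(u,G_m;t)|$, and the tail with multiplicity $s>w$ is controlled in expectation via Lemma~\ref{lem:pick_same_vertex}(a) (expected number $O(t^{s}/(s!\,n^{s-1}))$), summed as a geometric series using $t<n/2$. It is this tail estimate, not a uniform $w$-fold branching bound, that makes the step rigorous; and it is the combination of the two inductions (outer on the edge index giving $((w-1)!)^{D}$, inner on distance giving $w^{d_m(x_m,v)}\le w^{D}$) that produces $(w!)^{D}$. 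Your proposal gestures at this but does not supply the strengthened inner-induction statement nor the partition/tail argument, so as written it is incomplete.
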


%\referee{In Lemma 2.3 the $w$ without indices might be confused with the $\omega$ used earlier, but I don't have a good suggestion.}
%\natc{I don't have a good suggestion either}

\begin{proof}

%Suppose that  graph $G$ has $m$ edges.
We will prove a slightly stronger statement: for any vertex $v \in V(G)$ and any $i \in \{0, 1, \ldots, m\}$,
\begin{equation}\label{eq:induction}
    \E \ |S(v,H_i;t)| \le \frac{t^{w_i(v)}}{n^{w_i(v)-1}}\left(2(w_i(v)!)^{D} - 1 \right),
\end{equation}
where we use the shorthand $w_i = w_{H_i}$. Then, the lemma will follow by taking $i=m$ and observing that $w_i(v)!\le m!$. %In order to simplify the notation, we use the shorthand $w_i = w_{G_i}$.
The proof is by induction on $i$.

The base case is trivial. Indeed, if $i = 0$, then by the definition of $w_{0}$, the weighting is identically zero.  Clearly, for all $v \in V(H)$,
$$
S(v,H_{0};t) = n = \frac{t^0}{n^{-1}}\left(2(0!)^{D} - 1\right),
$$
and so the desired inequality~(\ref{eq:induction}) holds.

For the inductive step, suppose that $i \ge 1$ and that~(\ref{eq:induction}) holds for $H_{i-1}$. We will show that it also holds for $H_i$. If $w_i(v) = 0$, then clearly $|S(v,H_i;t)| \le n = \frac{t^0}{n^{-1}}\left(2(0!)^{D} - 1\right)$ and we are done. If $w_i(v) = 1$, then $v$ must be in some edge in $H_i$. If $v$ has a positive out-degree in $H_i$, then each vertex in $S(v,H_i;t)$ has a positive out-degree in $G_t$. Otherwise, $v$ has a positive in-degree in $H_i$, and then each vertex in $S(v,H_i;t)$ has a positive in-degree in $G_t$. Either way, $|S(v,H_i;t)| \le t = \frac{t^1}{n^{0}}\left(2(1!)^{D} - 1\right)$ and we are done again.

Moreover, as, obviously,
$$
|S(v,H_i;t)| \le |S(v,H_{i-1};t)|,
$$
%as belonging to a copy of $G_i$ is a strictly stronger condition than belonging to a copy of $G_{i-1}$.
the result follows immediately for all vertices $v$ with $w_i(v) = w_{i-1}(v)$. Consequently, we only need to consider vertices $v$ where $w_i(v) \ge 2$ and $w_i(v) \ne w_{i-1}(v)$.

Let  $e_i=x \rightarrow y$  and set $w_i(x)=w$. The condition $w_i(v) \ne w_{i-1}(v)$ only holds if  there is a directed path from $x$ to $v$ in $H_i$, in which case $w_i(v) = w$ too. We will show by induction on the distance $d_i(x,v)$ from $x$ to $v$  in $H_i$ that
\begin{equation}\label{eqn:recursion_on_d(x,v)}
    \E \ |S(v,H_i;t)| \le \frac{t^{w}}{n^{w-1}}\left(2w^{d_i(x,v)}\left((w-1)!\right)^{D} - 1 \right).
\end{equation}
Since $d_i(x,v) \le diam(H_i) \le D$, this will suffice to prove~(\ref{eq:induction}) and so to finish the proof of the lemma.

From now on we suppress the subscript $i$ in $d_i(x,y)$.
First consider the case $d(x,v)=0$, that is, $v=x$. If  $u\in S(x,H_i;t)$, then there must be some time $t' < t$ so that  $u \in S(x, H_{i-1};t')$ and $u$ was selected by the semi-random process at time $t'+1$, when the image of the edge $e_i$ was added to create a copy of $H_i$. The probability that some vertex in $S(x, H_{i-1};t')$ was selected by the semi-random process at time $t'+1$ is  $|S(x,H_{i-1};t')|/n$.
Thus,
$$\E \ |S(x,H_i;t)| \le \sum_{t'=0}^{t-1} \frac{|S(x,H_{i-1};t')|}{n}$$
and, by the linearity of expectation and \eqref{eq:induction}, valid for $i-1$, we have
\begin{align*}
\E \ |S(x,H_i;t)| &\le \sum_{t'=0}^{t-1} \frac{\E \ |S(x,H_{i-1};t')|}{n} \\
&\le \sum_{t'=0}^{t-1} \frac{(t')^{w-1}}{n\left(n^{w-2}\right)}\left(2\left((w-1)!\right)^{D} - 1 \right) \\
&\le \frac{t^{w}}{n^{w-1}}\left(2\left((w-1)!\right)^{D} - 1 \right),
\end{align*}
as required.

Now, consider the case $d(x,v)>0$, that is,  $v \ne x$ and there is a directed path from $x$ to $v$, and suppose that the hypothesis~(\ref{eqn:recursion_on_d(x,v)}) holds for all $u$ with $d(x,u) < d(x,v)$. We fix a directed path from $x$ to $v$ of minimum length. Let $u$ be the vertex preceding $v$ on this path, so  $d(x,u) = d(x,v) - 1$. Observe that, by the definition of the weight function,  $w_i(u)= w$.

The number of vertices in $S(v,H_i;t)$ is bounded by the number of edges in $G_t$ that are the images of the edge $u \rightarrow v$ under some homomorphism $\phi$ from $H_i$ to $G_t$. We partition the vertices in $S(u,H_i;t)$ into classes according to how many of the edges they are incident to are the images of $u \rightarrow v$ under some homomorphism. If a vertex in $ S(u,H_i;t)$ is incident to exactly $a$ such edges, it contributes at most $a$ vertices to $S(v, H_i;t)$.

Thus, the total contribution to $S(v,H_i;t)$ from all vertices in $ S(u,H_i;t)$ that are incident to at most $w$ such edges is at most $w |S(u,H_i;t)|$.
 On the other hand, the expected number of vertices in $S(u,H_i;t)$ that are incident to exactly $a > w$ edges that are images of $u \rightarrow v$ is, by Lemma \ref{lem:pick_same_vertex}(a), at most $\frac{2t^a}{a!n^{a-1}}$. Combining these estimates, we have
\begin{align*}
    \E \ |S(v,H_i;t)| &\le w \E \ |S(u,H_i;t)| + 2\sum_{a\ge w+1} a\frac{t^a}{a!n^{a-1}}\\
    &\le w \E \ |S(u,H_i;t)|  + \frac{2t^w}{w!n^{w-1}}\left( \frac{t}{n} + \frac{t^2}{n^2} + \ldots \right) \\
    &\le w \frac{t^{w}}{n^{w-1}}\left(2w^{d(x,u)}\left((w-1)!\right)^{D} - 1 \right) + \frac{t^w}{n^{w-1}} \cdot \frac{t}{n-t},
\end{align*}
since $w \ge 2$.  It follows that
\begin{align*}
    \E \ |S(v,H_i;t)| &\le \frac{t^w}{n^{w-1}} \left(2w^{d(x,u)+1}\left((w-1)!\right)^{D} - w +   \frac{t}{n-t} \right) \\
    &\le  \frac{t^w}{n^{w-1}} \left(2w^{d(x,v)}\left((w-1)!\right)^{D} - 1 \right),
\end{align*}
as $t < n/2$. Thus, inequality~(\ref{eqn:recursion_on_d(x,v)}) holds for all $v$ with a directed path from $x$ to $v$. This finishes the proof of the lemma.
\end{proof}

Now we combine the two lemmas to prove the theorem.

\begin{proof}[Proof of Theorem~\ref{thm:lower_bound}]
Let $H$ be a graph on $k$ vertices and $m$ edges with degeneracy $d$. Let $H'$ be the (non-empty) $d$-core of $H$ so, in particular, $H'$ has minimum degree at least $d$. We will show that, regardless of the strategy used by the player, a.a.s.\ $H'$ is not a subgraph of $G_t$ as long as $t = o(n^{(d-1)/d})$. As a result, the same is true for $H \supseteq H'$.

As mentioned at the beginning of this section, it is enough to show that the player cannot create a copy of $H$ following a specific (but arbitrarily chosen) orientation and order of the edges of $H$. Clearly, there are $2^m m!$ different configurations to select from (which is a large constant, but it does not depend on $n$). We may consider $2^m m!$ auxiliary games, one for each configuration, on top of the regular game. Each game (both the auxiliary ones and the original one) is played by $2^m m!+1$ perfect players aiming to achieve their own respective goals. All the games are coupled in a natural way, that is, exactly the same squares are presented by the semi-random process to each of the players.

Fix an edge ordering and an orientation of the edges of $H$, and consider a perfect player playing the corresponding auxiliary game.
Applying Lemma~\ref{lem:weight_>_min_deg} to the $d$-core $H'$ of $H$, we see that  $w_{H'}(v) \ge d$ for each vertex $v \in V(H')$.
Thus, by Lemma~\ref{lem:weight_related_to_H}, for each vertex $v \in V(H')$
$$
\E \ |S(v,H';t)| \le \frac{t^{d}}{n^{d-1}}\left(2(e(H')!)^{|V(H')|-1} - 1 \right) = O \left( \frac{t^{d}}{n^{d-1}} \right).
$$

Note that the above bound holds for \emph{all} vertices $v \in V(H')$. This property is slightly stronger than we need as we only need it for \emph{one} vertex of $V(H')$.
Let us fix then an arbitrary vertex $v_0$ of $H'$. % and let $\omega = \omega(n)$ be any function that tends to infinity as $n \to \infty$.
If $t = o\left(n^{(d-1)/d}\right)$, then we have $\E \ |S(v_0,H';t)| = o(1)$ and so, by the first moment method, a.a.s.\ there are no vertices in $S(v_0,H';t)$.
It follows that a.a.s.\ there is no copy of $H'$, and thus of $H$, in $G_t$ with this given order and orientation of edges. In other words, the player playing this specific auxiliary game does not win the game a.a.s at time $t = o\left(n^{(d-1)/d}\right)$.

This holds for every edge ordering and every orientation of the edges of $H$. As mentioned earlier, the number of such orderings and orientations is a constant depending only on $m=|E(H)|$. Thus, by the union bounds over all auxiliary games, a.a.s.\ all players playing auxiliary games lose their own respective games. It follows that a.a.s.\ a perfect player playing the original game loses too (if not, the other players could all mimic the same strategy, and one of them would win her game). The proof is finished.
\end{proof}

%%%%%%%%%%%%%%%%%%%%%%%%%%%%%%%%%%%%%%%%%%%%%%%%%%%%%%%%%%%
\section{Proofs for hypergraphs when $r=1$}\label{sec:hypergraphs r=1}
%%%%%%%%%%%%%%%%%%%%%%%%%%%%%%%%%%%%%%%%%%%%%%%%%%%%%%%%%%%

In the case when $r=1$, for each step $t$ of the semi-random process for hypergraphs, a single vertex $u_t$ is chosen uniformly at random from $[n]$, the same as for the process on graphs. The player then replies by selecting a set of $s-1$ vertices $V_t$, and the edge $\{u_t\} \cup V_t$ is added.

%\subsection{The case $r=1$}
The proofs of Theorems \ref{thm:hypergraph_r=1_upper_bound} and \ref{thm:hypergraph_r=1_lower_bound} follow the same approach as in the graph case considered in Section~\ref{sec:graphs} and, therefore, we only sketch them here emphasizing the required differences.
The proof of Theorems \ref{thm:hypergraph_r=1_upper_bound} is again based on Lemma \ref{lem:pick_same_vertex} and, indeed, proceeds mutatis mutandis.

 \begin{proof}[Proof of Theorem~\ref{thm:hypergraph_r=1_upper_bound}]
Follow the same approach as in the proof of Theorem \ref{thm:upper_bound}, dividing the process into phases where in each phase, the next vertex according to the degeneracy ordering is added. In each phase, we must create all edges in which the new vertex is the last vertex. Since $r=1$ these can be constructed in exactly the same way.
\end{proof}

%\subsection{Lower bound}
Now we outline the proof of Theorem \ref{thm:hypergraph_r=1_lower_bound} by discussing the necessary changes in the proof of the graph counterpart, which is Theorem \ref{thm:lower_bound}.
Let $H$ be a hypergraph on $k$ vertices and $m$ edges. For the purposes of the proof of Lemma~\ref{lem:hypergraph_weight_>_min_deg}, we allow $H$ to be not necessarily uniform, but with every edge containing at most $s$ vertices, and we also allow there to be potentially multiple copies of edges on $<s$ vertices. Call such a hypergraph \emph{$s$-bounded}. This is analogous to the so-called loopy graphs used in the proof of Lemma \ref{lem:weight_>_min_deg}.

Fix an ordering of the edges $(e_1,e_2,\ldots,e_m)$ of $H$, and for each edge $e_i$ fix a leading vertex $x_i$.
Given such an $H$, define an auxiliary directed $graph(H)$ on the same vertex set $V$ where for each edge $e_i \in H$, we have that $graph(H)$ contains the directed edges $x_i \rightarrow u$ for every $u \in e_i \setminus \{x_i\}$. Note that the only property of $graph(H)$ we will use is whether two vertices have a directed path between them, so while we will add any arcs in opposite directions, we do not add any parallel arcs that are in the same direction.

As in the graph case, we recursively define a weight function $w_H:V \rightarrow \N\cup\{0\}$ on the vertices of $H =(V,E)$ that is dependent on the edge order and choice of leading vertices. Let $H_0$ be the empty hypergraph on vertex set $V$ and define the weighting $w_{H_0}:V \rightarrow \N\cup\{0\}$ to be uniformly zero.
For $1 \le i \le m = |E|$, let $H_i$ have vertex set $V$ and edge set $\{ e_1, e_2, \ldots,e_i \}$ (so $H_m = H$). In particular, $H_i$ is $H_{i-1}$ with edge $e_i$ added. Define $w_{H_i}:V \rightarrow \N\cup\{0\}$ by
$$
    w_{H_i}(x_i) = w_{H_{i-1}}(x_i) + 1
$$
and for all other vertices $v$,
$$
    w_{H_i}(v) = \begin{cases} \max\{w_{H_i}(x_i), w_{H_{i-1}}(v)\} &\text{if $x_i \leadsto v$ in $graph(H_i)$} \\
     w_{H_{i-1}}(v) &\text{otherwise,}
     \end{cases}
$$
where $x_i \leadsto v$ denotes that there is a directed path from $x_i$ to $v$.

\begin{lemma}\label{lem:hypergraph_weight_>_min_deg}
If an $s$-bounded hypergraph $H$ has minimum degree $\delta = \delta(H)$, then $w_H(v) \ge \delta$ for every vertex $v$.
\end{lemma}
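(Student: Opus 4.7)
My plan is to follow, almost verbatim, the strategy used in the proof of Lemma~\ref{lem:weight_>_min_deg} for graphs, inducting on $m=|E(G)|$. The base case $m=0$ is trivial. For the inductive step, I would first observe that if $\delta(G_{m-1})\ge\delta$ then the inductive hypothesis applied to $G_{m-1}$ gives $w_{G_{m-1}}(v)\ge\delta$ for every $v$, and hence the same holds for $w_{G_m}$ since weights are monotone non-decreasing when an edge is added. So I may assume $\delta(G_{m-1})=\delta-1$, giving $w_{G_{m-1}}(v)\ge\delta-1$ for all $v$ by induction, and in particular $w_{G_m}(x_m)=w_{G_{m-1}}(x_m)+1\ge\delta$. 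Letting $S$ be the set of vertices $v$ with $x_m\leadsto v$ in $graph(G_m)$, the update rule immediately yields $w_{G_m}(v)\ge\delta$ for every $v\in S$.

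The next step is to handle $T:=V\setminus S$ (assuming it is non-empty) by constructing an auxiliary hypergraph $T_m$ on vertex set $T$ having fewer than $m$ edges and minimum degree at least $\delta$, and then invoking the outer inductive hypothesis. I would define $T_m$ by processing the edges in order: if $x_i\in T$, add $\bar{e}_i:=e_i\cap T$ to $T_i$ with leading vertex $x_i$ (permitting singleton ``loop'' edges when $|\bar{e}_i|=1$, which is legitimate since the lemma allows non-uniform hypergraphs with parallel edges); if $x_i\notin T$, set $T_i:=T_{i-1}$. Since $x_m\in S$, the edge $e_m$ is not added, so $|E(T_m)|\le m-1$. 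The key structural fact, which carries over from the graph case, is that $graph(G_m)$ contains no arc from $S$ to $T$: if $x_i\in S$ and $u\in e_i\setminus\{x_i\}$, then $x_m\leadsto x_i\to u$ forces $u\in S$. Consequently, for every $v\in T$, every hyperedge $e_i$ containing $v$ must have $x_i\in T$ and thus $v\in\bar{e}_i$, yielding $d_{T_m}(v)=d_{G_m}(v)\ge\delta$ and hence $\delta(T_m)\ge\delta$.

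The technical heart of the argument, which I expect to be the main obstacle, is the auxiliary claim $w_{T_i}(v)=w_{G_i}(v)$ for every $v\in T$ and every $0\le i\le m$; this is what lets the outer induction actually transfer back to $G_m$. I would prove this by a secondary induction on $i$, with a case split on whether $x_i\in T$. The crucial observation is a reachability-restriction lemma: for $v\in T$ with $v\ne x_i$, one has $x_i\leadsto v$ in $graph(G_i)$ if and only if $x_i\leadsto v$ in $graph(T_i)$. The ``only if'' direction uses that any such path starts and ends in $T$ but cannot enter $S$ (otherwise $v\in S$), so it stays inside $T$; and on $T$, the arcs of $graph(T_i)$ are exactly the arcs of $graph(G_i)$ by the construction of $T_i$. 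This ensures the update rules for $w_{G_i}$ and $w_{T_i}$ behave identically on $T$-vertices. Applying the outer inductive hypothesis to $T_m$ then gives $w_{T_m}(v)\ge\delta$ for all $v\in T$, and hence $w_{G_m}(v)\ge\delta$ on all of $V=S\cup T$, completing the proof.
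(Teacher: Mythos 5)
Your proof is correct and follows essentially the same approach the paper intends: the paper defers this lemma to the graph proof of Lemma~\ref{lem:weight_>_min_deg}, and your reconstruction is exactly the natural generalization, including the key step of defining $T_i$ via the restriction $\bar{e}_i = e_i\cap T$ (which produces the singleton and non-uniform edges that the paper's preamble to Section~\ref{sec:hypergraphs r=1} explicitly allows for precisely this purpose) and the reachability argument transferring weights between $graph(G_i)$ and $graph(T_i)$.
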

\begin{proof}
The proof follows the same approach as the proof of Lemma \ref{lem:weight_>_min_deg}, using induction on the number of edges $m = |E(H)|$.
 The base case  $m=0$ is trivial.

For the inductive step, let $\delta=\delta(G_m)$. If $G_{m-1}$ has minimum degree $\delta$ then we are done, so we assume $G_{m-1}$ has minimum degree $\delta - 1$ and that $w_{G_{m-1}}(v) \ge \delta - 1$ for all $v$. We have $w_{G_m}(x_m) = w_{G_{m-1}}(x_m) + 1 \ge \delta$. Let $S$ be the set of all vertices $v$ where there is a directed path from $x_m$ to $v$ in $graph(G_m)$.

We know that $x_m \in S$ and $w_{G_m}(v) \ge w_{G_m}(x_m) \ge \delta$ for any $v \in S$. If $S = V(G_m)$ then we are done, so suppose not and let $T = V(G_m) \setminus S$. %Note that by definition there are no directed edges in $graph(G_m)$ from a vertex in $S$ to a vertex in $T$.
 We construct a series of auxiliary hypergraphs $F_i$ for $i = 0,1,\ldots, m$ on vertex set $T$ as follows. Let $F_0$ be the empty graph on vertex set $T$. For each $1 \le i \le m$, consider the edge $e_i$ in $G_m$.
 \begin{itemize}
     \item If $x_i \in T$ then let $F_i$ be $F_{i-1}$ with edge $e_i \cap T$ added (with multiplicity if already included).
     \item Otherwise, if $x_i \not\in T$, let $F_i = F_{i-1}$.
 \end{itemize}
% These hypergraphs naturally inherit the edge ordering from $G_m$.

% Since $x_m \not\in T$, $e_m$ is not added to $F_m$ and so we know that
$F_m$ has strictly fewer than $m$ edges and for all $v \in T$, the degree $d_{F_m}(v) = d_{G_m}(v)$. Thus by the inductive hypothesis, the weighting $w_{F_m}:T\rightarrow \N\cup \{0\}$ has $w_{F_m}(v) \ge \delta$ for all $v \in T$.
 On the other hand, we can also show that $w_{F_i}(v) = w_{G_i}(v)$ for all $v \in T$ and all $i \le m$ by the same argument as in Lemma~\ref{lem:weight_>_min_deg}.
 %This certainly holds for $i=0$. If $x_i \in T$, then since there are no directed edges from $S$ to $T$ in $graph(G_m)$, we can see that adding $e_i$ to $G_{i-1}$ to get $G_i$ has the same impact on vertex weights within $T$ as adding the appropriate edge to $F_{i-1}$ to get $F_i$. For the same reason, if $x_i \not\in T$ then adding edge $e_i$ to $G_{i-1}$ to get $G_i$ has no effect on vertex weights within $T$. In particular, we obtain $w_{G_m}(v) = w_{F_m}(v) \ge \delta$ for all $v \in T$ and, consequently, $w_{G_m}(v)\ge\delta$ for all
% $v\in V(G_m)$, since we already had that $w_{G_m}(v) \ge \delta$ for $v \in S = V(G_m) \setminus T$. The proof of the lemma is finished.
\end{proof}

Let $G_t = G_t^{(1,s)}$ be the semi-random hypergraph after $t$ time-steps and let $u_t$ be the single vertex in the randomly chosen set $U_t$ at time $t$.
Before we can state our main lemma, we need to generalize some of our earlier definitions to hypergraphs. Let $H$ be an hypergraph with a fixed edge order $e_1, e_2, \ldots, e_m$ where each edge $e_i$ is assigned a leading vertex $x_i \in e_i$. A homomorphism from $H$ to $G_t$ is a map that respects the leading vertices and edge ordering in the natural way (analogously to the graph case). For a vertex $v \in V(H)$, define $S(v,H;t)$ to be the set of vertices in $G_t$ which are the image $\phi(v)$ for some homomorphism $\phi$ from $V(H)$ to $V(G_t)$.
We also define the diameter $diam(H)$ of the hypergraph to be the diameter of the auxiliary graph $graph(H)$, in the same sense as defined in Section~\ref{sec:graphs}.

\begin{lemma} \label{lem:hypergraph_weight_related_to_H}
Let $H$ be an $s$-graph with a fixed edge order $e_1, e_2, \ldots, e_m$ where each edge $e_i$ is assigned a leading vertex $x_i \in e_i$. Let $w_H$ be the vertex weighting defined above. Then, for any strategy $\mathcal{S}$ of the player, any $t < n/2$, and any vertex $v \in V(H)$ with $w = w_H(v)$,
$$
\E \ |S(v,H;t)| \le \frac{t^{w}}{n^{w-1}}\left(2\left((s-1)^{m}m!\right)^{D} - 1 \right)
$$
where $D = \max_i\{diam(H_i)\} \le |V(H)| - 1$.
\end{lemma}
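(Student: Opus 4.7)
The plan is to follow the two-level induction used for Lemma \ref{lem:weight_related_to_H} in the graph case, with appropriate modifications to accommodate that each added hyperedge now has $s-1$ followers rather than a single tail vertex. As before, the proof proceeds by induction on $i \in \{0, 1, \ldots, m\}$, establishing the strengthened statement that
\[
\E \, |S(v, G_i; t)| \le \frac{t^{w_i(v)}}{n^{w_i(v)-1}} k \left(2 (w_i(v)!)^D - 1\right)
\]
for every vertex $v \in V(G)$. The base case $i = 0$ is immediate (all weights are zero and $|S(v, G_0; t)| \le n \le nk$), and the routine cases $w_i(v) \in \{0, 1\}$ or $w_i(v) = w_{i-1}(v)$ are handled by direct bounds and monotonicity $S(v, G_i; t) \subseteq S(v, G_{i-1}; t)$, exactly as in the graph proof.

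The substantive case is when $v$ is reachable from $x_i$ in $graph(G_i)$ and $w_i(v) = w_i(x_i) = w \ge 2$. I then run a secondary induction on $d := d(x_i, v)$ in $graph(G_i)$, aiming to show
\[
\E \, |S(v, G_i; t)| \le \frac{t^w}{n^{w-1}} k \left(2 w^{d} ((w-1)!)^D - 1\right).
\]
For $d = 0$ (i.e., $v = x_i$), the analysis is essentially identical to the graph case: since $r = 1$, the single random vertex presented at step $t' + 1$ is exactly the leading vertex of the added hyperedge, so $\E|S(x_i, G_i; t)| \le \sum_{t'=0}^{t-1} \E|S(x_i, G_{i-1}; t')|/n$, and invoking the primary IH yields the bound.

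For $d > 0$, I pick a predecessor $u$ of $v$ on a shortest directed path from $x_i$ to $v$ (so $d(x_i, u) = d-1$ and $w_i(u) = w$). The arc $u \to v$ in $graph(G_i)$ comes from some hyperedge $e_j$ of $G_i$ in which $u$ is the leading vertex and $v$ is among its $s-1$ followers. For each $a \in S(u, G_i; t)$, I bound the number of admissible $b = \phi(v)$ by the total number of follower-positions in hyperedges of $H_t$ led by $a$, which is at most $(s-1)\, d_{sq}(a)$, where $d_{sq}(a)$ denotes the number of squares on $a$. Partitioning $S(u, G_i; t)$ according to $d_{sq}(a)$: the contribution of vertices with $d_{sq}(a) \le w$ is at most $(s-1) w \, \E|S(u, G_i; t)|$ (combined with the secondary IH), while for $d_{sq}(a) = \ell > w$ the expected count is controlled by Lemma \ref{lem:pick_same_vertex}(a), producing a geometric-tail sum of lower-order terms.

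The principal obstacle, compared with the graph proof, is the extra multiplicative factor of $s - 1$ per recursive step that arises from the multiple follower positions within each hyperedge (as opposed to the unique other endpoint in the graph setting). Since $s - 1 \le k$ and $D \le k - 1$ are constants independent of $n$, the accumulated slack is absorbed into the overall multiplicative factor $k$ in the statement; the tightness of this constant plays no role in the applications, since the theorems of this section require only the $O(t^w / n^{w-1})$ scaling of $\E|S(v, G; t)|$.
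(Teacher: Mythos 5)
Your plan follows the same two-level induction as the paper, which itself gives only a sketch, remarking that the argument is a mutatis mutandis transfer of Lemma~\ref{lem:weight_related_to_H} once directed paths are read off the auxiliary graph $graph(G_i)$. Your explicit identification of the per-step factor of $s-1$ (each hyperedge led by $a$ offers $s-1$ candidate images for $v$) is correct, and it supplies the one genuine modification that the paper leaves implicit.

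One bookkeeping issue is worth flagging. The secondary induction hypothesis you write, with main term $2w^d((w-1)!)^D$, does not close under your own recursion $\E|S(v,G_i;t)|\le (s-1)w\,\E|S(u,G_i;t)|+(\text{tail})$: stepping from $d-1$ to $d$ multiplies the main term by $(s-1)w$, not $w$, so the hypothesis should carry $((s-1)w)^d$. Iterating this $D$ times produces a final constant on the order of $(s-1)^D(w!)^D$, which for $s\ge 3$ and $D\ge 2$ is not bounded by the $k\bigl(2(w!)^D-1\bigr)$ displayed in the lemma (a single factor of $k$ absorbs the easy case $w_i(v)=1$, where you need $(s-1)t$ rather than $t$, but not the iterated $(s-1)^D$). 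This imprecision is inherited from the paper's own statement, whose terse sketch never mentions the $s-1$ factor at all. Since $k,s,D,w$ are constants independent of $n$, the only fact used in Theorem~\ref{thm:hypergraph_r=1_lower_bound} is $\E|S(v,G;t)|=O(t^w/n^{w-1})$, so, as you correctly observe at the end, the exact constant is immaterial to the application.
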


\begin{proof}
The proof of this lemma follows the exact same approach as the proof of Lemma \ref{lem:weight_related_to_H}.
Specifically, one can show by induction on $i$ that if hypergraph $H$ has $m$ edges then for any vertex $v \in V(H)$ and any $i \in \{0, 1, \ldots, m\}$,
\begin{equation*}\label{eq:hypergraph_induction}
    \E \ |S(v,H_i;t)| \le \frac{t^{w_i}}{n^{w_i-1}}\left(2\left((s-1)^{w_i}w_i!\right)^{D} - 1 \right),
\end{equation*}
where $w_i = w_{H_i}(v)$. There are two changes needed in the proof. The first is to use the directed paths given by the auxiliary graphs $graph(H_i)$, whereas in the original proof the $H_i$'s were themselves oriented.

The second is where the extra $(s-1)^{wD}$ factor arises. It comes when considering the case where $d(x,v)>0$  and there is a directed path from $x$ to $v$. We fix a directed path from $x$ to $v$ in $graph(H_i)$ of minimum length and let $u$ be the vertex preceding $v$ on this path, so  $d(x,u) = d(x,v) - 1$.

There must be a hyperedge $e$ of $H_i$  containing both $u$ and $v$ in which $u$ is the leading vertex.
We partition the vertices in $S(u,H_i;t)$ into classes according to how many of the hyperedges they are incident to are the images of $e$ under some homomorphism. If a vertex in $ S(u,H_i;t)$ is incident to exactly $a$ such hyperedges, then it contributes at most $(s-1)a$ vertices to $S(v,H_i;t)$. This is the number of hyperedges multiplied by the number of other vertices in each hyperedge.

Thus, the total contribution to $S(v,H_i;t)$ from all vertices in $ S(u,H_i;t)$ that are incident to at most $w_i$ such edges is at most $(s-1)w_i |S(u,H_i;t)|$.
 On the other hand, the expected number of vertices in $S(u,H_i;t)$ that are incident to exactly $a > w_i$ edges that are images of $e$ is, by Lemma \ref{lem:pick_same_vertex}(a), at most $\frac{2t^a}{a!n^{a-1}}$. Combining these estimates, we have
\begin{align*}
    \E \ |S(v,H_i;t)| &\le (s-1)w_i \E \ |S(u,H_i;t)| + 2\sum_{a\ge w_i+1} (s-1)a\frac{t^a}{a!n^{a-1}}\\
\end{align*}
and the rest of the proof follows along the same lines as before.
\end{proof}

Now we combine the two lemmas to prove the lower bound.

\begin{proof}[Proof of Theorem~\ref{thm:hypergraph_r=1_lower_bound}]
Let $H$ be a hypergraph on $k$ vertices and $m$ edges with degeneracy $d$. Let $H'$ be the (non-empty) $d$-core of $H$ so, in particular, $H'$ has minimum degree at least $d$.

Using the same coupling argument as in the proof of
Theorem  \ref{thm:lower_bound} one can see that, regardless of the strategy used by the player, a.a.s.\ $H'$ is not a sub-hypergraph of $G_t$ as long as $t = o(n^{(d-1)/d})$. As a result, the same is true for $H \supseteq H'$.
\end{proof}

\section{Open Problems}

Let us finish the paper with a few open problems. The value of $\tau_{\mathcal{P}_H}$ is determined for \emph{any} uniform hypergraph $H$  (if $r=1$); see Theorem~\ref{thm:any_hypergraph}. (This covers the case when $H$ is a graph.) In fact, we establish that a.a.s.\ one may construct $H$ in $t \gg n^{(d-1)/d}$ rounds but cannot do it in $t = o\left(n^{(d-1)/d}\right)$ rounds, where %$\omega=\omega(n)$ is any function tending to infinity as $n \to \infty$ and
$d$ is the degeneracy of~$H$.

Note that the nature of our proofs means that they work equally well when $H$ has parallel edges and/or if $H$ is a non-uniform hypergraph with all edges of size at most $s$. (In the graph case, this amounts to a multi-graph with loops.) The definition of degeneracy has to be adjusted accordingly but the proofs go through without any alteration.

It remains to investigate the probability of success after $t = c n^{(d-1)/d}$ rounds, where $c$ is some fixed positive constant. It is natural to expect that an optimal strategy produces $(1+o(1))f(c)$ copies of $H$ in expectation for some deterministic function $f(c)$, and then the limiting probability that the strategy fails falls into the open interval $(0,1)$. Under some structural properties of $H$, it may actually tend to $\exp(-f(c))$, per analogy with the purely random (hyper)graph (see~\cite[Chapter 3]{JLR}). However, determining an optimal strategy and analyzing it might be challenging.

\begin{problem}
Determine the limiting probability that $\mathcal{P}_H$ holds for $t=c n^{(d-1)/d}$ for some positive constant $c$.
\end{problem}

Note also that Theorem~\ref{thm:any_hypergraph} applies to a fixed hypergraph $H$. If the order of $H$ is an increasing function of $n$, then our results do \emph{not} apply. In the extreme but quite natural case, $H$ may have $n$ vertices, so the player is after a spanning sub-hypergraph of $G^{(r,s)}_t$ isomorphic to $H$. For graphs, as mentioned in the introduction, we know that a.a.s.\ one may construct a copy of a graph with bounded degree in $(3\Delta/2+o(\Delta))n$ rounds~\cite{beneliezer2020fast}. However, these upper bounds are asymptotic in $\Delta$. When $\Delta$ is constant in $n$, such as in both the perfect matching and the Hamiltonian cycle setting, determining the optimal dependence on $\Delta$ of the number of rounds needed to construct $H$ remains open. A good starting point (apart from matchings and Hamiltonian cycles already considered in~\cite{gao2020hamilton,gao2022fully,frieze2022hamilton}) might be to investigate $F$-factors, that is, spanning subgraphs whose all components are isomorphic to a fixed connected graph.

\begin{problem}
Given a graph  $F$, estimate the number $t$ of rounds needed to a.a.s.\ construct an $F$-factor in $G_t$ on $n$ vertices, where $n$ is divisible by $|V(F)|$.
\end{problem}

For hypergraphs, we know much less in the case when $r$, the number of randomly selected vertices at each step, is greater than $1$.
We can define $\tau^{(r)}_{\mathcal{P}_H}$ analogously to $\tau_{\mathcal{P}_H}$ by  replacing $G^{(s)}_t = G^{(1,s)}_t$ with  $G^{(r,s)}_t$. In particular, $\tau^{(1)}_{\mathcal{P}_H}$ is the $\tau_{\mathcal{P}_H}$ explored in detail in this paper.
The most ambitious goal would be to obtain a general formula for $\tau^{(r)}_{\mathcal{P}_H}$.
\begin{problem}
Given an $s$-graph $H$ and an integer $2\le r<s$, determine $\tau^{(r)}_{\mathcal{P}_H}$.
\end{problem}
In~\cite{behague2024}, we obtained a general lower bound $\tau^{(r)}_{\mathcal{P}_H}\ge n^{r-(k-s+r)/m}$, where $k=|V(H)|$ and $m=|E(H)|$, showed its optimality for certain classes of hypergraphs and better bounds for some others. However, the general question remains wide open.
To answer it, we believe, one would need to come up with entirely new strategies of the player.

%As one can see, the limitation of our results in this section stems from the chosen strategy, which needs the  $r$-graphs obtained after phase one to be edge-disjoint. Further progress, we believe, will require  coming up with entirely different strategies for the player.
% The authors have obtained some general lower and upper bounds on $\tau^{(r)}_{\mathcal{P}_H}$, as well as some better bounds for particular classes of $s$-graphs such as cliques and generalised stars. We will investigate this further in a follow-up paper.
%\natc{We probably want to say a bit more here}
%However, the general question remains wide open.
%To obtain a general formula, we believe one would need to come up with entirely new player strategies.

%\printbibliography

\bibliographystyle{abbrv}
\bibliography{refs}

\end{document}